\def\frk{\frak}               
\def\Phi{{\frk n}}
\def\Phi{{\frk N}}
\def\opn#1#2{\def#1{\operatorname{#2}}} 
\opn\chara{char} \opn\length{\ell} \opn\pd{pd} \opn\rk{rk}
\opn\projdim{proj\,dim} \opn\injdim{inj\,dim} \opn\rank{rank}
\opn\depth{depth} \opn\grade{grade} \opn\height{height}
\opn\embdim{emb\,dim} \opn\codim{codim}
\opn\Tr{Tr} \opn\bigrank{big\,rank}
\opn\superheight{superheight}\opn\lcm{lcm}
\opn\trdeg{tr\,deg}
\opn\reg{reg} \opn\lreg{lreg} \opn\ini{in} \opn\lpd{lpd}
\opn\size{size}
\opn\div{div} \opn\Div{Div} \opn\cl{cl} \opn\Cl{Cl}
\opn\Spec{Spec} \opn\Supp{Supp} \opn\supp{supp} \opn\Sing{Sing}
\opn\Ass{Ass} \opn\Min{Min}
\opn\Ann{Ann} \opn\Rad{Rad} \opn\Soc{Soc}
\opn\Im{Im} \opn\Ker{Ker} \opn\Coker{Coker} \opn\Am{Am}
\opn\Hom{Hom} \opn\Tor{Tor} \opn\Ext{Ext} \opn\End{End}
\opn\Aut{Aut} \opn\id{id}
\opn\nat{nat}
\opn\pff{pf}
\opn\Pf{Pf} \opn\GL{GL} \opn\SL{SL} \opn\mod{mod} \opn\ord{ord}
\opn\Gin{Gin} \opn\Hilb{Hilb}
\opn\aff{aff} \opn\con{conv} \opn\relint{relint} \opn\st{st}
\opn\lk{lk} \opn\cn{cn} \opn\core{core} \opn\vol{vol}
\opn\link{link} \opn\star{star}
\opn\gr{gr}
\def\pot#1#2{#1[\kern-0.28ex[#2]\kern-0.28ex]}
\opn\dirlim{\underrightarrow{\lim}}
\opn\inivlim{\underleftarrow{\lim}}
\def\Implies{\ifmmode\Longrightarrow \else
        \unskip${}\Longrightarrow{}$\ignorespaces\fi}
\def\implies{\ifmmode\Rightarrow \else
        \unskip${}\Rightarrow{}$\ignorespaces\fi}
\def\iff{\ifmmode\Longleftrightarrow \else
        \unskip${}\Longleftrightarrow{}$\ignorespaces\fi}
\newtheorem{Theorem}{Theorem}[section]
\newtheorem{Lemma}[Theorem]{Lemma}
\newtheorem{cor}[Theorem]{Corollary}
\newtheorem{Proposition}[Theorem]{Proposition}
\newtheorem{Example}[Theorem]{Example}
\newtheorem{Definition}[Theorem]{Definition}
\let\epsilon\varepsilon
\let\phi=\varphi
\let\kappa=\varkappa
\def\qed{\ifhmode\textqed\fi
      \ifmmode\ifinner\quad\qedsymbol\else\dispqed\fi\fi}
\def\textqed{\unskip\nobreak\penalty50
       \hskip2em\hbox{}\nobreak\hfil\qedsymbol
       \parfillskip=0pt \finalhyphendemerits=0}
\def\dispqed{\rlap{\qquad\qedsymbol}}
\opn\dis{dis}
\def\pnt{{\raise0.5mm\hbox{\large\bf.}}}
\opn\Lex{Lex}
\begin{document}
\title{Algebraic Characterization of the SSC $\Delta_s(\mathcal{G}_{n,r}^{1})$}

\author{Agha Kashif$^1$, Zahid Raza$^2$, Imran Anwar$^3$   }
 \thanks{ {\bf 1.} University of Management and Technology Lahore, Pakistan.\\
         {\bf 2.} University of Sharjah, College of Sciences, Department of Mathematics,United Arab Emirates.\\
         {\bf 3.} ASSMS, Government College University, Lahore, Pakistan.}
\email {kashif.khan@umt.edu.pk, zraza@sharjah.ac.ae, iimrananwar@gmail.com}

 \maketitle
\begin{abstract}
In this paper, we characterize the set of spanning trees of
$\mathcal{G}_{n,r}^{1}$ (a simple connected graph consisting of $n$
edges, containing exactly one {\em $1$-edge-connected chain} of $r$
cycles $\mathbb{C}_r^1$ and
$\mathcal{G}_{n,r}^{1}\setminus\mathbb{C}_r^1$ is a {\em forest}).
We compute the Hilbert series of the face ring
$k[\Delta_s(\mathcal{G}_{n,r}^{1})]$ for the spanning simplicial
complex $\Delta_s(\mathcal{G}_{n,r}^{1})$. Also, we characterize
associated primes of the facet ideal
$I_{\mathcal{F}}(\Delta_s(\mathcal{G}_{n,r}^{1}))$. Furthermore, we
prove that the face ring $k[\Delta_s(\mathcal{G}_{n,r}^{1})]$ is
Cohen-Macaulay.
\end{abstract}
\noindent
 {\it Key words: } simplicial complex, $f$-vector, face ring, facet ideal, spanning trees, primary decomposition, Hilbert series, Cohen-Macaulay ring.\\
 {\it 2000 Mathematics Subject Classification}: Primary 13P10, Secondary 13H10, 13F20, 13C14.
\section{introduction}
The study of simplicial complexes arising form a simple graph has
been an important topic and attracted good literature. One popular
chapter of this literature is the complementary simplicial complex
$\Delta_G$ of a graph $G$; for example, see \cite{Vi}. The notion of
spanning simplicial complex (SSC) $\Delta_s(G)$ associated to a
simple connected graph $G(V,E)$ was firstly introduced in
\cite{ARK}. For {\em uni-cyclic graphs} $U_{n,m}$, it is proved that
$\Delta_s(U_{n,m})$ is {\em shifted} in \cite{ARK}.  Zhu, Shi and
Geng \cite{Zh} further investigated the algebraic and combinatorial
properties of SSC associated to $n$-cyclic graphs with a common
edge. In \cite{KAR}, the authors investigated the algebraic
properties of SSC $\Delta_s(G_{n,r})$
 associated to {\em r-cyclic graphs} $G_{n,r}$ (containing exactly $r$ cycles having no edge in common). Moreover, they proved that the facet ideal $I_{\mathcal{F}}(\Delta_s(G_{n,r}))$ has linear quotients with respect to its generating set and computed the {\em betti numbers} of $I_{\mathcal{F}}(\Delta_s(G_{n,r}))$ for particular cases. Some other interesting classes of simple finite connected graphs are studied for SSC by Pan, Li and Zhu in \cite{PLZ} and Guo and Wu in \cite{GW}.\\
In this paper, we investigate the class of spanning simplicial
complexes $\Delta_s(\mathcal{G}_{n,r}^{1})$ associated to
$\mathcal{G}_{n,r}^{1}$. Where $\mathcal{G}_{n,r}^{1}$ is a
connected graph having $n$ edges, containing exactly one {\em
$1$-edge-connected chain} of $r$ cycles $\mathbb{C}_r^1$ and
$\mathcal{G}_{n,r}^{1}\setminus\mathbb{C}_r^1$ is a {\em forest}. In other words, $\mathcal{G}_{n,r}^{1}$ is a graph consisting of $r$ cycles such that every pair of consecutive cycles have exactly one edge common between them. If
$C_1,C_2,\ldots,C_r$ are the $r$ cycles of the graph
$\mathcal{G}_{n,r}^{1}$ forming $\mathbb{C}_r^1$ with respective
lengths $m_1,m_2,\ldots, m_r$ then we fix the label of edge set of
$\mathcal{G}_{n,r}^{1}$ as follows;
\begin{equation}
E=\{e_{11},\ldots,e_{1m_1},e_{21}, \ldots
,e_{1m_2-1},\ldots,e_{r1},\ldots ,e_{rm_r-1},e_1,\ldots,e_t\}
\end{equation}
where, $t=n-\sum\limits_{i=1}^{r} m_i+(r-1)$ and $\{e_{i1}, \ldots,
e_{iv}\}$ is the edge-set of $i$th-cycle such that $v=m_1$ for
$i=1,\;\; v=m_i-1$ for $i> 1$ and $e_{i1}$ always represents the
common edge between $i$th and $(i+1)$th-cycle (for $1\leq i< r$). We
give the characterization of $s(\mathcal{G}_{n,r}^{1})$ in
\ref{scn}. The formulation for $f-vectors$ is presented in \ref{fsc}
which further applied to device a formula to compute the {\em
Hilbert series} of the {\em face ring}
$k\big[\Delta_s(\mathcal{G}_{n,r}^{1})\big]$ (see \ref{Hil}).
Moreover in \ref{Ass}, we characterize of all the associated primes
of the facet ideal
$I_{\mathcal{F}}(\Delta_s(\mathcal{G}_{n,r}^{1}))$. Finally, we
prove that the face ring $k[\Delta_s(\mathcal{G}_{n,r}^{1})]$ is
Cohen-Macaulay in \ref{CM}.

\section{Background and basic notions}
In this section, we give some background and preliminaries of the
topic and define some notions that will be useful in the sequel.

\begin{Definition}\label{spa}
\em {A {\em spanning tree} of a simple connected finite graph
$G(V,E)$ is a subtree of $G$ that contains every vertex of $G$. We
represent the collection of all edge-sets of the spanning trees of
$G$ by $s(G)$, in other words;
$$s(G):=\{E(T_i)\subset E ,    \hbox {\, where $T_i$ is a spanning tree of $G$}\}.$$
}\end{Definition} \noindent For any simple connected graph $G$, the
authors mentioned the {\em cutting-down method} to obtain all the
spanning trees of $G$ in \cite{ARK}. According to this method a
spanning tree is obtained by removing one edge from each cycle
appearing in the graph. However, for the graph
$\mathcal{G}_{n,r}^{1}$ with $r$ cycles having one edge common in
every consecutive cycles and the labeling given in $(1)$, one can
obtain its spanning trees by removing exactly $r$ edges from the
graph with not more than two edges deleted from any cycle. Also,
keeping in view that if a common edge between two cycles is removed
then only one edge can be removed from the non common edges explicitly from the cycles on the either side of the common edge.\\
For example by using the above said {\em cutting-down method} for
the graph $\mathcal{G}_{10,2}^{1}$ given in fig. $1$:\\
$s(\mathcal{G}_{10,2}^{1})=\big\{ \{ e_1, e_2, e_3, e_4, e_{13},
e_{11}, e_{23}, e_{21}\}, \{ e_1, e_2, e_3, e_4, e_{13}, e_{11},
e_{23}, e_{22}\},\\ \{ e_1, e_2, e_3, e_4, e_{13}, e_{11}, e_{21},
e_{22}\}, \{ e_1, e_2, e_3, e_4, e_{13}, e_{23}, e_{21}, e_{22}\},
\{ e_1, e_2, e_3, e_4, e_{12}, e_{11}, e_{23}, e_{21}\},\\ \{ e_1,
e_2, e_3, e_4, e_{12}, e_{11}, e_{23}, e_{22}\}, \{ e_1, e_2, e_3,
e_4, e_{12}, e_{11}, e_{21}, e_{22}\}, \{ e_1, e_2, e_3, e_4,
e_{12}, e_{23}, e_{21}, e_{22}\},\\ \{ e_1, e_2, e_3, e_4, e_{13},
e_{12}, e_{23}, e_{22}\},\{ e_1, e_2, e_3, e_4, e_{13}, e_{12},
e_{23}, e_{21}\},\{ e_1, e_2, e_3, e_4, e_{13}, e_{12}, e_{21},
e_{22}\},\\ \{ e_1, e_2, e_3, e_4, e_{13}, e_{23},e_{21}, e_{22}\},
\{ e_1, e_2, e_3, e_4, e_{12}, e_{23},e_{21}, e_{22}\}\big\}$

\begin{center}
\begin{picture}(300,80)\label{fig}
\thicklines
\put(100,27){\line(1,0){69.6}}\put(100,60){\line(1,0){70}}
\put(96,25){$\bullet$}
\put(77,27){\line(1,0){30}}\put(75,24){$\bullet$}\put(85,18){${e_{13}}$}\put(104,40){${e_{11}}$}

\put(57,27){\line(1,0){30}}\put(77,27){\line(2,3){23}}\put(55,24){$\bullet$}\put(65,18){${e_{1}}$}
\put(99,27){\line(0,1){33}}\put(97,57){$\bullet$}
\put(70,40){$e_{12}$}\put(172,40){$e_{21}$}\put(200,40){$e_{3}$}
\put(132,18){${e_{23}}$}\put(182,18){${e_{2}}$}\put(205,18){${e_{4}}$}
\put(132,64){$e_{22}$} \put(165.5,25){$\bullet$}
\put(166,27){\line(1,0){33}}\put(195,24.5){$\bullet$}
\put(197,27){\line(1,0){33}}\put(225,24.5){$\bullet$}
\put(197,27){\line(0,1){30}}\put(195,54){$\bullet$}

\put(168,27){\line(0,1){33}}\put(165.5,57){$\bullet$}
\put(116,-4){\tiny Fig. 1 . $\mathcal{G}_{10,2}^1$}
\end{picture}
\end{center}

\begin{Definition}{\em
A {\em simplicial complex} $\Delta$ over a finite set $[n]=\{1,
2,\ldots,n \}$ is a collection of subsets of $[n]$, with the
property that $\{i\}\in \Delta$ for all $i\in[n]$, and if $F\in
\Delta$  then $\Delta$ will contain all the subsets of $F$
(including the empty set). An element of $\Delta$ is called a face
of $\Delta$, and the dimension of a face $F$ of $\Delta$ is defined
as $|F|-1$, where $|F|$ is the number of vertices of $F$. The
maximal faces of $\Delta$ under inclusion are called facets of
$\Delta$. The dimension of the simplicial complex $\Delta$ is :
$$\hbox{dim} \Delta = \max\{\hbox{dim} F | F \in \Delta\}.$$
We denote the simplicial complex $\Delta$ with facets $\{F_1,\ldots
, F_q\}$ by $$\Delta = \big\langle F_1,\ldots, F_q\big\rangle $$ }
\end{Definition}
\begin{Definition}\label{fvec}
{\em For a simplicial complex $\Delta$ over $[n]$ having dimension
$d$, its $f-vector$ is a $d+1$-tuple, defined as:
$$f(\Delta)=(f_0,f_1,\ldots,f_d)$$
where $f_i$ denotes the number of $i-dimensional$ faces in $\Delta.$
}\end{Definition}

\begin{Definition}\label{co}{\bf (Spanning Simplicial Complex )}\\
{\em Let $G(V,E)$ be a a simple finite connected graph and
$s(G)=\{E_1, E_2,\ldots,E_t\}$ be the edge-set of all possible
spanning trees of $G(V,E)$, then we defined (in \cite{ARK}) a
simplicial complex $\Delta_s(G)$ on $E$ such that the facets of
$\Delta_s(G)$ are precisely the elements of $s(G)$, we call
$\Delta_s(G)$ as the {\em spanning simplicial complex} of $G(V,E)$.
In other words;
$$\Delta_s(G)=\big\langle E_1,E_2,\ldots,E_t\big\rangle.$$
}\end{Definition}
 Here we recall a definition from \cite{F1}.
\begin{Definition}{\em
Let $\Delta$ be a simplicial complex with vertex set $V=[n]$ and
facets $F_1,F_2,\ldots,F_q$. A {\em vertex cover} for $\Delta$ is a
subset $A$ of $V$ such that $A\cap F_i\neq\emptyset$ for all
$i\in\{1,2,\ldots,q\}$. A {\em minimal vertex cover} of $\Delta$ is
a subset $A$ of $V$ such that $A$ is a {\em vertex cover}, and no
proper subset of $A$ is a {\em vertex cover} for $\Delta$.}
\end{Definition}
For example, the {\em minimal vertex covers} for the {\em spanning
simplicial complex} $\Delta_s(\mathcal{G}_{10,2}^1)$ given in Fig.
1, are as follows:
$$\{e_1\},\{e_2\},\{e_3\},\{e_4\},\{e_{13},e_{12}\},\{e_{23},e_{22}\}$$

\section{Spanning trees of $\mathcal{G}_{n,r}^1$ and Face ring $\Delta_s(\mathcal{G}_{n,r}^1)$ }

In this section, we discuss the combinatorial properties of
$\mathcal{G}_{n,r}^1$.  We use $\bf \tau(\mathcal{G}_{n,r}^1)$ to
denote the {\bf total number of cycles} contained in
$\mathcal{G}_{n,r}^1$. We begin with the elementary result, that
tells the total number of cycles contained by $\mathcal{G}_{n,r}^1$.
\begin{Proposition}\label{cycles}{\em
The total number of cycles in the graph $\mathcal{G}_{n,r}^1$ will
be
$$\tau(\mathcal{G}_{n,r}^1)=\frac{r(r+1)}{2}$$
}\end{Proposition}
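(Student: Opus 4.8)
The plan is to count cycles in $\mathcal{G}_{n,r}^1$ by observing that every cycle of the graph is determined by a contiguous block of the chain $C_1, C_2, \ldots, C_r$. Since consecutive cycles $C_i$ and $C_{i+1}$ share exactly one edge and the rest of the graph is a forest (hence contributes no cycles), a cycle in $\mathcal{G}_{n,r}^1$ is formed precisely by choosing a pair of indices $1 \le i \le j \le r$ and taking the symmetric-difference-type union of the consecutive cycles $C_i, C_{i+1}, \ldots, C_j$: the shared edges $e_{i+1,1}, \ldots, e_{j,1}$ cancel out, and what remains is a single cycle through the "outer" edges of that block. I would first justify carefully that this construction does yield a genuine cycle (a connected $2$-regular subgraph), and conversely that every cycle arises this way — the key point being that any cycle must be a union of consecutive $C_i$'s because skipping a cycle in the chain would disconnect the would-be cycle, and a cycle cannot use only part of the "interior" structure.

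The main step, then, is the bijection between cycles of $\mathcal{G}_{n,r}^1$ and pairs $(i,j)$ with $1 \le i \le j \le r$. Granting this, the count is immediate:
\[
\tau(\mathcal{G}_{n,r}^1) = \#\{(i,j) : 1 \le i \le j \le r\} = \binom{r+1}{2} = \frac{r(r+1)}{2}.
\]
One can also see this inductively: $\mathcal{G}_{n,r}^1$ is obtained from $\mathcal{G}_{n',r-1}^1$ by gluing one more cycle $C_r$ along a single edge, and this adds exactly $r$ new cycles — namely the blocks $(i,r)$ for $i = 1, \ldots, r$ — to the $\binom{r}{2}$ already present, giving $\binom{r}{2} + r = \binom{r+1}{2}$. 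The induction base $r=1$ gives one cycle, matching $\frac{1\cdot 2}{2}=1$.

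The part requiring the most care is the converse direction of the bijection: showing that there are no "exotic" cycles beyond the consecutive blocks. Here I would argue that if a cycle $D$ uses an edge of $C_i$ and an edge of $C_k$ with $k > i+1$ but avoids all edges of some intermediate $C_\ell$ ($i < \ell < k$), then $D$ cannot be connected, because in $\mathcal{G}_{n,r}^1$ every path from the $C_i$-side to the $C_k$-side must traverse the common edge $e_{\ell,1}$ (the chain is $1$-edge-connected at each junction). Similarly, the forest part $\mathcal{G}_{n,r}^1 \setminus \mathbb{C}_r^1$ contributes nothing since it contains no cycle and is attached in a way that creates no new ones. Once these structural observations are in place, the enumeration is just the standard count of pairs, and the proof concludes.
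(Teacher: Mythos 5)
Your proof is correct and follows essentially the same route as the paper: both identify the cycles of $\mathcal{G}_{n,r}^1$ with contiguous blocks $C_i,\ldots,C_j$ of the chain (obtained by deleting the intermediate common edges) and count the pairs $(i,j)$ with $1\le i\le j\le r$, giving $\binom{r+1}{2}=\frac{r(r+1)}{2}$. If anything, you treat the converse direction --- that no ``exotic'' cycles exist beyond the contiguous blocks --- more carefully than the paper, which simply asserts that these are all the cycles.
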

\begin{proof}
As the graph $\mathcal{G}_{n,r}^{1}$ contains one-edge connected
chain $\mathbb{C}_r^1$ of $r$ cycles $\{C_1, C_2,\ldots, C_r\}$. By
removing the common edges between any number of consecutive cycles,
we obtain a cycle by the remaining edges. The cycle obtained in this
way by adjoining consecutive cycles $C_i,C_{i+1},\ldots,C_{i+k}$ is
denoted by $\bf C_{i,i+1,\ldots,i+k}$. Therefore, we get the
following cycles
$$C_{1,2},C_{2,3},\ldots,C_{r-1,r},C_{1,2,3},\ldots,C_{r-2,r-1,r},\ldots,C_{2,3,\ldots,r},C_{12,3,\ldots,r}$$
Hence, the set of all possible cycles contained in the graph
$\mathcal{G}_{n,r}^{1}$ will be
$$\{C_{i,i+1,\ldots,i+k}\mid \;\;\;i\in\{1,2,\ldots,r-k\}\;\hbox{and}\; 0 \le k\le r-1\}.$$
Therefore, we get the total number of cycles contained in the graph
$\mathcal{G}_{n,r}^{1}$ as
$$\tau(\mathcal{G}_{n,r}^{1})=\sum\limits_{k=0}^{r-1}{\sum\limits_{i=1}^{r-k}1}=\frac{r(r+1)}{2}.$$\end{proof}
It is clear from above proposition that the cycle
$C_{i,i+1,\ldots,i+k}$ is obtained by removing the common edges in
between the adjacent cycles $C_i, C_{i+1},\ldots, C_{i+k}$. We
denote the {\bf length of cycle $\bf C_{i,i+1,\ldots, i+k}$} by $\bf
|C_{i,i+1,\ldots,i+k}|$.
\begin{Proposition}\label{scn}
\em{Let $\mathcal{G}_{n,r}^{1}$ be a graph containing the one-edge
connected chain $\mathbb{C}_r^1$ of $r$ cycles $\{C_1, C_2,\ldots,
C_r\}$, then the length of cycle $C_{i,i+1,\ldots,i+k}$ will be
$$\Big|C_{i,i+1,\ldots,i+k}\Big|=\sum\limits_{\alpha=0}^{k}\big|C_{i+\alpha}\big|-2k.$$
}\end{Proposition}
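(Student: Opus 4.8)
The plan is to compute the length of the cycle $C_{i,i+1,\dots,i+k}$ by induction on $k$, tracking exactly how many edges are gained and lost when a new cycle is adjoined to an existing chain. For the base case $k=0$ the statement reads $|C_i| = |C_i|$, which is trivial. For the inductive step, suppose the formula holds for the cycle $C_{i,i+1,\dots,i+k-1}$ obtained by fusing $C_i,\dots,C_{i+k-1}$, so that its length is $\sum_{\alpha=0}^{k-1}|C_{i+\alpha}| - 2(k-1)$. Now I would adjoin the next cycle $C_{i+k}$: the cycles $C_{i+k-1}$ and $C_{i+k}$ share exactly one common edge (call it $e_{(i+k)1}$ in the labelling of $(1)$), and when we delete this common edge the union of the two cycle boundaries becomes a single cycle. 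The key bookkeeping observation is that passing from the pair $\bigl(C_{i,\dots,i+k-1},\, C_{i+k}\bigr)$ to $C_{i,\dots,i+k}$ we take the disjoint-edge union of their edge sets and then remove the one shared edge, so the total edge count goes from $\bigl(\sum_{\alpha=0}^{k-1}|C_{i+\alpha}| - 2(k-1)\bigr) + |C_{i+k}|$ down to that same quantity minus $2$ (one removed edge from each side's count of the shared edge, i.e. the shared edge was counted twice in the naive sum and is now absent). This yields $\sum_{\alpha=0}^{k}|C_{i+\alpha}| - 2(k-1) - 2 = \sum_{\alpha=0}^{k}|C_{i+\alpha}| - 2k$, completing the induction.

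Alternatively, and perhaps more cleanly, I would argue directly without induction: the edge set of $C_{i,i+1,\dots,i+k}$ is obtained from the disjoint union of the edge sets of $C_i, C_{i+1}, \dots, C_{i+k}$ by identifying and then deleting the $k$ common edges $e_{(i+1)1}, e_{(i+2)1}, \dots, e_{(i+k)1}$, one shared by each consecutive pair among the $k+1$ cycles. Each such common edge appears in exactly two of the cycles $C_{i+\alpha}$, hence is counted twice in $\sum_{\alpha=0}^{k}|C_{i+\alpha}|$; since it is removed entirely from $C_{i,i+1,\dots,i+k}$, each of the $k$ common edges contributes a deficit of $2$. No other edge is affected, because in the chain $\mathbb{C}_r^1$ no edge lies in three or more cycles and non-adjacent cycles share no edge. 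Therefore $|C_{i,i+1,\dots,i+k}| = \sum_{\alpha=0}^{k}|C_{i+\alpha}| - 2k$.

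The main obstacle, such as it is, is purely expository: one must justify carefully that after deleting all $k$ common edges the resulting edge set really does form a single cycle (rather than, say, a disconnected graph or a graph with a vertex of degree $\neq 2$), and that no edge is shared by more than two consecutive cycles. This is exactly the structural content of $\mathbb{C}_r^1$ being a \emph{$1$-edge-connected chain} of cycles together with the explicit labelling in $(1)$, where $e_{i1}$ is declared to be the unique common edge between the $i$th and $(i+1)$th cycle; so the geometric claim follows immediately from the definition of $\mathcal{G}_{n,r}^1$ and from Proposition \ref{cycles}, which already established that $C_{i,i+1,\dots,i+k}$ is a cycle. Once that is in hand, the count is the elementary inclusion–exclusion tally described above.
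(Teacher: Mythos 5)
Your proposal is correct, and its second (direct counting) argument is essentially identical to the paper's own proof: the paper simply observes that each of the $k$ common edges is counted twice in $\sum_{\alpha=0}^{k}|C_{i+\alpha}|$ and is deleted from $C_{i,i+1,\ldots,i+k}$, giving the deficit of $2k$. Your inductive version and your remarks on verifying that the result is a single cycle are just more careful elaborations of the same idea.
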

\begin{proof}
It is clear from above that $C_{i,i+1,\ldots,i+k}$ is obtained by
deleting the common edges shared by the adjacent cycles $\{C_i,
C_{i+1},\ldots, C_{i+k}\}$ in $\mathcal{G}_{n,r}^{1}$. Therefore,
the length of the cycle $C_{i,i+1,\ldots,i+k}$ is obtained by adding
lengths of all $C_i,C_{i+1},\ldots,C_{i+k}$ and subtracting $2k$
from it, since the common edges are being counted twice. Hence, we
have
$$
\Big|C_{i,i+1,\ldots,i+k}\Big|=\sum\limits_{\alpha=0}^{k}\big|C_{i+\alpha}\big|-2k.
$$
\end{proof}
We use $\bf |C_{i,i+1,\ldots,i+k}\bigcap C_{j,j+1,\ldots,j+l}|$ to
denote the {\bf number of edges shared by the cycles}
$C_{i,i+1,\ldots,i+k}$ and $C_{j,j+1,\ldots,j+l}$. The following
proposition characterizes $|C_{i,i+1,\ldots,i+k}\bigcap
C_{j,j+1,\ldots,j+l}|$ in $\mathcal{G}_{n,r}^{1}$.
\begin{Proposition}\label{scn}
\em{Let $\mathcal{G}_{n,r}^{1}$ be a graph containing the one-edge
connected chain $\mathbb{C}_r^1$ of $r$ cycles $\{C_1, C_2,\ldots,
C_r\}$ of lengths $m_1, m_2,\ldots, m_r$, then for $1\le k\le l\le
r$ we have
$$\Big|C_{i,i+1,\ldots,i+k}\bigcap C_{j,j+1,\ldots,j+l}\Big|=\left\{
                                                       \begin{array}{ll}
                                                         1, & {i+k=j-1;} \\
                                                         |C_{j,j+1,\ldots,j+\alpha}|-2, & {i+k=j+\alpha ;\; 0\le\alpha \le k-1;} \\
                                                         |C_{j,j+1,\ldots,j+k}|-1, & {i+k=j+k;} \\
                                                         |C_{j,j+1,\ldots,j+l}|, & {i+k=j+l\;and\; l=k;} \\
                                                         |C_{i,i+1,\ldots,i+k}|-2, & {i+k = j+\alpha; \; k+1 \le \alpha \le l-1;} \\
                                                         |C_{i,i+1,\ldots,i+k}|-1, & {i+k = j+l;} \\
                                                         |C_{i,i+1,\ldots,i+k-\alpha}|-2, & {i+k=j+l+\alpha; \; 1\le\alpha\le k;} \\
                                                         1, & {i=j+l+1;} \\
                                                         0, & {otherwise.}
                                                       \end{array}
                                                     \right.$$}
\end{Proposition}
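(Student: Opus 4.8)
The plan is to argue purely combinatorially, tracking which physical edges of $\mathcal{G}_{n,r}^1$ survive in each of the two composite cycles $C_{i,i+1,\ldots,i+k}$ and $C_{j,j+1,\ldots,j+l}$. Recall from the proof of Proposition \ref{cycles} that $C_{i,\ldots,i+k}$ is formed from the consecutive cycles $C_i,\ldots,C_{i+k}$ by deleting the $k$ common edges $e_{(i+1)1},e_{(i+2)1},\ldots,e_{(i+k)1}$ internal to that block; thus its edge set consists of the non-common edges of all of $C_i,\ldots,C_{i+k}$ together with the two boundary common edges $e_{i1}$ (shared with $C_{i-1}$) and $e_{(i+k+1)1}$ (shared with $C_{i+k+1}$), when those exist. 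The intersection with $C_{j,\ldots,j+l}$ is then determined entirely by how the index blocks $\{i,\ldots,i+k\}$ and $\{j,\ldots,j+l\}$ overlap on the chain, since $\mathbb{C}_r^1$ is a linear chain of cycles. Without loss of generality I take $i\le j$ (the labeling in the statement already normalizes $k\le l$, and the two ``$1$''-cases and the final ``$0$'' case handle the symmetric/degenerate situations).

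The key step is a clean case split on the position of $j$ relative to the block $\{i,\ldots,i+k\}$, i.e. on $\alpha := (i+k)-j$. If $\alpha=-1$ (block $j,\ldots,j+l$ starts exactly one cycle beyond $i+k$), the two composite cycles are ``adjacent'' in the chain and share precisely the single common edge $e_{(i+k+1)1}=e_{j1}$, giving $1$. If $0\le\alpha\le k-1$, the overlap of index sets is $\{j,\ldots,i+k\}$, a block of $\alpha+1$ consecutive cycles strictly inside both composite cycles; the shared edges are exactly the non-common edges of $C_j,\ldots,C_{i+k}$, which is the edge set of the composite cycle $C_{j,\ldots,j+\alpha}$ minus its two boundary common edges $e_{j1}$ and $e_{(i+k+1)1}$ — but both of those boundary edges are also present in each composite cycle here (they lie interior to the longer blocks), so one must recount carefully; this yields $|C_{j,\ldots,j+\alpha}|-2$ in the generic sub-case and the stated corrections $|C_{j,\ldots,j+k}|-1$ and $|C_{j,\ldots,j+l}|$ when one or both of those boundary edges become genuine boundary edges of one of the two composite cycles (i.e. when $i+k=j+k$, forcing $i=j$ so the left ends coincide, or when additionally $l=k$ so the cycles coincide). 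The remaining cases $k+1\le\alpha\le l-1$, $\alpha=l$, and $l+1\le\alpha\le l+k$ are handled by the mirror-image bookkeeping, now with the roles of the two blocks reversed so that $C_{i,\ldots,i+k}$ is the one contributing its (near-)full edge set, producing $|C_{i,\ldots,i+k}|-2$, then $|C_{i,\ldots,i+k}|-1$, then $|C_{i,\ldots,i+k-\beta}|-2$ with $\beta=\alpha-l$; and the final ``$1$'' when $i=j+l+1$ is the adjacency case on the other side. Everything outside this range of $\alpha$ means the index blocks are disjoint and non-adjacent, so no edge is shared and the answer is $0$.

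I expect the main obstacle to be not the logic of the case split but the exact edge-counting at the three boundary edges $e_{i1}$, $e_{(i+k+1)1}$, $e_{j1}$, $e_{(j+l+1)1}$ — deciding in each sub-case whether each such edge is (a) interior to both composite cycles and hence deleted in neither, (b) interior to one and boundary of the other, or (c) boundary of both. This is precisely the source of the ``$-2$'' versus ``$-1$'' versus ``$-0$'' discrepancies in the stated formula, and getting all nine branches to match requires disciplined use of the conventions from $(1)$ (that $e_{i1}$ is always the common edge on the left of $C_i$) together with Proposition \ref{scn} (the length formula $|C_{i,\ldots,i+k}|=\sum_{\alpha=0}^k|C_{i+\alpha}|-2k$) to convert ``number of non-common edges of $C_j,\ldots,C_{i+k}$'' into the closed forms written in terms of $|C_{j,\ldots,j+\alpha}|$. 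Once the boundary-edge accounting is tabulated, each of the nine values drops out as a one-line consequence.
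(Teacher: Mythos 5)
Your proposal follows essentially the same route as the paper's proof: both identify the edge set of a composite cycle $C_{i,i+1,\ldots,i+k}$ as the non-common edges of its constituent cycles together with its two outer boundary common edges, and then run a case analysis on the relative position $\alpha=(i+k)-j$ of the two index blocks, with the $-2$/$-1$/$-0$ corrections determined by which boundary common edges of the overlap survive in which composite cycle (the paper likewise works out four representative cases and leaves the mirror cases ``similar''). One small caution: your parenthetical asserting that both boundary common edges of the overlap $C_{j,\ldots,j+\alpha}$ are ``present in each composite cycle'' is backwards --- each of those two edges is interior to (hence deleted from) exactly one of the two larger blocks, which is exactly why the generic count comes out to $|C_{j,\ldots,j+\alpha}|-2$ --- but the values you arrive at in every branch agree with the stated formula.
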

\begin{proof}
Here we denote $m_{i,i+1,\ldots,i+k}=\Big|C_{i,i+1,\ldots,i+k}\Big|$.\\
Now for $1\le k\le l\le
r$ we discuss the following cases for $\Big|C_{i,i+1,\ldots,i+k}\bigcap C_{j,j+1,\ldots,j+l}\Big|$ :\\
\begin{description}
          \item[Case (i)] If $i+k=j-1$, then the right most edges of the cycle $C_{i,i+1,\ldots,i+k}$ are from its adjoining cycle $C_{i+k}$ and the left most edges of the cycle $C_{j,j+1,\ldots,j+l}$ are from its adjoining cycle $C_{j+l}$, and since $C_{i+k}$ and $C_{j+l}$ are consecutive so they have only one edge in common.
          \item[Case (ii)] If $i+k=j+\alpha ;\; 0\le\alpha \le k-1$, then the left most $\alpha$ adjoining cycles of the cycles of $C_{j,j+1,\ldots,j+l}$, i.e., $C_j,C_{j+1},\ldots,C_{j+\alpha}$ coincide with the right most $\alpha$ adjoining cycles of the cycles of $C_{i,i+1,\ldots,i+k}$. Therefore, the intersection $C_{i,i+1,\ldots,i+k}\bigcap C_{j,j+1,\ldots,j+l}$ will contain all edges of $C_{j,j+1,\ldots,j+\alpha}$ except its two edges, one the edge of $C_{j,j+1,\ldots,j+\alpha}$ which is the common edge of $C_{j+\alpha}$ and $C_{j+\alpha+1}$ and second the edge of $C_{j,j+1,\ldots,j+\alpha}$ which is common edge between $C_j$ and $C_{j-1}$.
          \item[Case (iii)] If $i+k=j+k\; and\; k<l$, then $i=j$ and therefore the cycle $C_{i,i+1,\ldots,i+k}$ lies completely in the cycle $C_{j,j+1,\ldots,j+l}$ except its one edge which is the common edge between its adjoining cycle $C_{j+k}$ and the cycle $C_{j+k+1}$. Therefore the intersection $C_{i,i+1,\ldots,i+k}\bigcap C_{j,j+1,\ldots,j+l}$ will contain all edges of $C_{j,j+1,\ldots,j+k}$ except one.
          \item[Case (iv)] If $i+k=j+l+\alpha; \; 1\le\alpha\le k$, then the left most $k-\alpha+1$ adjoining cycles of the cycle $C_{i,i+1,\ldots,i+k}$, which are $C_i,C_{i+1},\ldots,C_{i+k-\alpha}$ coincide with the right most $k-\alpha+1$ adjoining cycles of the cycle $C_{j,j+1,\ldots,j+l}$. Hence the intersection $C_{i,i+1,\ldots,i+k}\bigcap C_{j,j+1,\ldots,j+l}$ will contain all the edges of the cycle $C_{i,i+1,\ldots,i+k-\alpha}$ except two; one is the common edge of its adjoining cycle $C_i$ and the cycle $C_{i-1}$ and the other is the common edge of its adjoining cycle $C_{i+k-\alpha}$ and the cycle $C_{i+k-\alpha+1}$.
          \end{description}
The remaining cases can be proved in the similar way.
\end{proof}

\begin{Lemma}\label{scn}{\bf Characterization of $s(\mathcal{G}_{n,r}^1)$}\\
\em{Let $\mathcal{G}_{n,r}^1$ be the $r-$cycles graph with edge set $E$ as defined in eq (1), then a subset $E(T_{(j_1i_1,j_2i_2,\hdots,j_ri_r)})\subset E$, where $j_\alpha\in\{1,2,\hdots,r\}$, $i_\alpha\in\{1,2,\hdots,m_{j_\alpha}-1\};\;j_\alpha\geq 2$ and $i_\alpha\in\{1,2,\hdots,m_1\};\;j_\alpha=1$, will belong to $s(\mathcal{G}_{n,r}^1)$ if and only if it satisfies any of the following:
\begin{enumerate}
  \item if $j_\alpha i_\alpha\neq j_\alpha 1$ for all $\alpha$ except for which $j_\alpha i_\alpha=ri_\alpha$, then $E(T_{(j_1i_1,j_2i_2,\hdots,j_ri_r)})=E\setminus\{e_{1i_1},e_{2i_2},\hdots, e_{ri_r}\}$
  \item if $j_\alpha i_\alpha= j_\alpha 1$ for any $\alpha$, then $E(T_{(j_1i_1,j_2i_2,\hdots,j_ri_r)})=E\setminus\{e_{j_1i_1},e_{j_2i_2},\hdots, e_{j_ri_r}\}$ where, $\{e_{j_1i_1},e_{j_2i_2},\hdots,e_{j_{\alpha -1}i_{\alpha -1}},e_{j_{\alpha +1}i_{\alpha +1}},\hdots , e_{j_ri_r}\}$ will contain exactly one edge from $C_{j_{\alpha}(j_{\alpha}+1)}\setminus \{ e_{(j_{\alpha}-1)1}, e_{j_{\alpha}1} \}$.
  \item if $j_{\alpha} i_{\alpha}= j_{\alpha} 1$ for  $\alpha \in \{r_1,r_1+1,\hdots,r_2\}$, where $1\le r_1<r_2< r$ then
        \begin{enumerate}
            \item if $e_{j_{r_1}1},e_{j_{(r_1+1)}1},\hdots, e_{j_{r_2}1}$ are common edges from consecutive cycles then $E(T_{(j_1i_1,j_2i_2,\hdots,j_ri_r)})=E\setminus\{e_{j_1i_1},e_{j_2i_2},\hdots, e_{j_ri_r}\}$ such that $\{e_{j_1i_1},e_{j_2i_2},\hdots, e_{j_ri_r}\}\setminus \{e_{j_{r_1}1},e_{j_{(r_1+1)}1},\hdots, e_{j_{r_2}1}\}$ will contain exactly one edge from $C_{j_{r_1}j_{(r_1+1)},\hdots,j_{r_2}}\setminus \{e_{(j_{r_1}-1)1},e_{j_{r_2}1} \}$,
            \item if none of $e_{j_{r_1}1},e_{j_{(r_1+1)}1},\hdots, e_{j_{r_2}1}$ are common edges from consecutive cycles then $E(T_{(j_1i_1,j_2i_2,\hdots,j_ri_r)})=E\setminus\{e_{j_1i_1},e_{j_2i_2},\hdots, e_{j_ri_r}\}$ such that for each edge $e_{j_{r_t}1}$ case 2 holds.
            \item if some of $e_{j_{r_1}1},e_{j_{(r_1+1)}1},\hdots, e_{j_{r_2}1}$ are common edges from consecutive cycles then $E(T_{(j_1i_1,j_2i_2,\hdots,j_ri_r)})=E\setminus\{e_{j_1i_1},e_{j_2i_2},\hdots, e_{j_ri_r}\}$ such that $(3.(a))$ is satisfied for the common edges of consecutive cycles and $(3.(b))$ is satisfied for remaining common edges.
        \end{enumerate}
\end{enumerate}

In particular, if we denote the above classes of subsets of $E$ by $\mathcal{C}_{(1)},\mathcal{C}_{(2)},\mathcal{C}_{(3a)},\mathcal{C}_{(3b)},\mathcal{C}_{(3c)}$ respectively then,
$$ s(\mathcal{G}_{n,r}^1)=\mathcal{C}_{(1)}\bigcup\mathcal{C}_{(2)}\bigcup\mathcal{C}_{(3a)}\bigcup\mathcal{C}_{(3b)}\bigcup\mathcal{C}_{(3c)}$$}
\end{Lemma}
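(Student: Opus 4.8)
The plan is to recast membership in $s(\mathcal{G}_{n,r}^1)$ as a purely combinatorial covering condition, as follows. The first Betti number of $\mathcal{G}_{n,r}^1$ equals $r$: the cycles $C_1,\dots,C_r$ are independent in the cycle space (each $C_\ell$ has length $\ge 3$ and shares at most two edges with the other $C_j$, so it carries an edge lying in no $C_j$ with $j\ne\ell$), and by Proposition~\ref{cycles} every cycle of $\mathcal{G}_{n,r}^1$ is a symmetric difference of a consecutive block $C_i,\dots,C_{i+k}$. Hence $|V|-1=n-r$, and a subset $F\subseteq E$ is the edge set of a spanning tree if and only if $D:=E\setminus F$ has exactly $r$ elements and $F$ contains no cycle of $\mathcal{G}_{n,r}^1$, because an acyclic spanning subgraph with $|V|-1$ edges is a tree. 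By Proposition~\ref{cycles} this last condition reads
$$D\cap E\bigl(C_{i,i+1,\dots,i+k}\bigr)\neq\emptyset\qquad\text{for all}\quad 1\le i\le i+k\le r .$$
Since every cycle of $\mathcal{G}_{n,r}^1$ is some $C_{i,\dots,i+k}$, which uses only chain edges, each forest edge $e_1,\dots,e_t$ lies in no cycle and is therefore a bridge; so $D\subseteq\bigcup_{\ell=1}^{r}E(C_\ell)$. It thus suffices to describe all $r$-element subsets $D$ of $\bigcup_{\ell}E(C_\ell)$ meeting every merged cycle $C_{i,i+1,\dots,i+k}$.

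\emph{Reach of a single deletion.} Recall (as used in establishing the length formula above) that $C_{i,\dots,i+k}$ is $C_i\cup\cdots\cup C_{i+k}$ with its internal common edges $e_{i1},e_{(i+1)1},\dots,e_{(i+k-1)1}$ removed. From this one reads off: a non-common edge of $C_\ell$ lies in $E(C_{i,\dots,j})$ exactly when $i\le\ell\le j$, while the common edge $e_{\ell1}$ lies in $E(C_{i,\dots,j})$ exactly when $\ell=i-1$ or $\ell=j$. Consequently, deleting a non-common edge of $C_\ell$ destroys every merged cycle whose index interval contains $\ell$, whereas deleting $e_{\ell1}$ destroys precisely the cycles $C_{i,\dots,\ell}$ (for $i\le\ell$) and $C_{\ell+1,\dots,j}$ (for $j\ge\ell+1$) and leaves every $C_{i,\dots,j}$ with $i\le\ell<j$ intact; in particular a non-common deletion covers exactly one of the elementary cycles $C_1,\dots,C_r$, while deleting $e_{\ell1}$ covers exactly the two elementary cycles $C_\ell$ and $C_{\ell+1}$.

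\emph{Case analysis and conclusion.} Write the common edges contained in $D$ as $e_{\ell_11},\dots,e_{\ell_p1}$ with $\ell_1<\cdots<\ell_p$; the remaining $r-p$ edges of $D$ are then non-common, and deleting the $p$ common edges lowers the cycle rank by exactly $p$ (each is still on a cycle at the moment it is removed, e.g. on $C_{\ell_t+1}$), so exactly $r-p$ further deletions are needed. A merged cycle $C_{i,\dots,j}$ survives the $p$ common deletions if and only if $\{\ell_1,\dots,\ell_p\}\cap\{i-1,j\}=\emptyset$, and each such survivor must be met by one of the $r-p$ non-common deletions. Combined with the reach computation, this forces exactly one non-common deletion inside each block of cycles delimited by the $\ell_t$, with the extra requirement that whenever $e_{\ell1},e_{(\ell+1)1},\dots$ is a run of common edges of consecutive cycles, the large merged cycle spanning that run together with its two flanking cycles (which no common deletion touches and which contains no other deleted edge) receives exactly one of the chosen non-common edges. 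Reading off the possible shapes of $\{\ell_1,\dots,\ell_p\}$ now yields the five families: $p=0$ gives $\mathcal{C}_{(1)}$; a single isolated $\ell_t$ gives $\mathcal{C}_{(2)}$; one maximal run of consecutive $\ell_t$'s gives $\mathcal{C}_{(3a)}$; several pairwise non-adjacent $\ell_t$'s give $\mathcal{C}_{(3b)}$; a general mixture gives $\mathcal{C}_{(3c)}$. Conversely, using the reach computation each of the five families is checked directly to consist of $r$-element sets meeting every merged cycle, hence of complements of spanning trees; this yields $s(\mathcal{G}_{n,r}^1)=\mathcal{C}_{(1)}\cup\mathcal{C}_{(2)}\cup\mathcal{C}_{(3a)}\cup\mathcal{C}_{(3b)}\cup\mathcal{C}_{(3c)}$.

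I expect the real work to be the last step: converting ``$D$ is an $r$-element hitting set for every index interval $[i,j]$, where a common deletion at $\ell$ covers the intervals ending at $\ell$ or starting at $\ell+1$ and a non-common deletion at $\ell$ covers every interval through $\ell$'' into the five mutually exclusive, exhaustive cases, and verifying in particular that each ``exactly one edge of $C_{\dots}$ lying outside its two flanking common edges'' clause is simultaneously necessary and sufficient and is precisely what the tuple indexing $(j_1i_1,\dots,j_ri_r)$ records. The Betti-number reduction, the bridge observation, and the reach computation are all routine.
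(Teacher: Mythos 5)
Your proposal is correct and reaches the same combinatorial content as the paper, but it formalizes the argument along a genuinely different axis. The paper works forward from the informal ``cutting-down method'': it asserts that a spanning tree arises by deleting $r$ edges, at most one from the non-common edges of each cycle, and then verifies cases (1), (2) and (3a) by inspection, dismissing the rest as ``similar''. You instead reduce everything to a single clean criterion: since the cycle rank of $\mathcal{G}_{n,r}^1$ is $r$, a set $F$ spans a tree iff $D=E\setminus F$ has exactly $r$ elements and meets every cycle, and by Proposition 3.1 the cycles are exactly the merged cycles $C_{i,i+1,\dots,i+k}$. Your ``reach'' computation (a non-common edge of $C_\ell$ hits exactly the intervals containing $\ell$; the common edge $e_{\ell 1}$ hits exactly the intervals ending at $\ell$ or beginning at $\ell+1$) is the ingredient the paper never writes down, and it is what makes the ``only if'' direction and the exhaustiveness of the five families provable rather than merely plausible: writing $A=\{\ell_1,\dots,\ell_p\}$ as a union of $s$ maximal runs, the surviving elementary cycles number $r-p-s$, each run leaves one large merged cycle unhit, and the budget of $r-p$ non-common deletions matches $(r-p-s)+s$ exactly, forcing one deletion per surviving elementary cycle and one per run block; a short interval argument then shows this also hits every other merged $C_{i,\dots,j}$. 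What your approach buys is a genuine biconditional and a transparent reason why the case split is by the run structure of the deleted common edges (empty, one singleton, one long run, several singletons, mixed); what it costs is that the final translation of the counting argument into the lemma's five clauses is still compressed --- you flag this honestly, and it is no more compressed than the paper's own treatment, but if you write this up you should spell out the arithmetic $r-p-s+s=r-p$ and the verification that every $C_{i,\dots,j}$ not hit by a common deletion contains either a surviving elementary cycle or an entire run block, since that is where the ``if and only if'' actually lives.
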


\begin{proof}
Since $\mathcal{G}_{n,r}^1$ is a $r$-cycles graph with cycles $C_1,C_2,\hdots,C_r$ and $e_{11},e_{21},\hdots,e_{(r-1)1}$ as common edges between consecutive cycles and by cutting down process a total of $r$ edges must be removed with not more than one edges from the non common edges of each cycle. Therefore, in order to obtain a spanning tree of $\mathcal{G}_{n,r}^1$ with none of common edges $e_{11},e_{21},\hdots,e_{(r-1)1}$ to be removed, we need to remove exactly one edge from the non common edges from each cycle. This explains the case (1) of the above lemma.

Now for a spanning tree of $\mathcal{G}_{n,r}^1$ such that exactly one common edge $e_{j_\alpha 1}$ is removed, we need to remove precisely $r-1$ edges using cutting down process from the remaining edges. However, from the non common edges of the cycle $C_{j_{\alpha}(j_{\alpha}+1)}$ , we cannot remove more than one edge (since that will result in a disconnected graph). This explains the proof of case of (2) of the lemma.

Next for the case (3.a), we need to obtain a spanning tree of $\mathcal{G}_{n,r}^1$  such that $r_2-r_1$ common edges must be removed from consecutive cycles. If $C_{j_{r_1}},C_{j_{r_1+1}},\hdots,C_{j_{r_2}}$ are consecutive cycles then the remaining $r-(r_1-r_2)$ edges must be removed in such a way that exactly one edge is removed from the non common edges of $C_{j_{r_1}j_{(r_1+1)},\hdots,j_{r_2}}$  and the remaining $r-(r_1-r_2)$ cycles of the graph $\mathcal{G}_{n,r}^1$ which concludes the case.

The remaining cases of the lemma can be visualised in similar manner using the above cases . Consequently, if we denote the above disjoint classes of subsets of $E$ by $\mathcal{C}_{(1)},\mathcal{C}_{(2)},\mathcal{C}_{(3a)},\mathcal{C}_{(3b)},\mathcal{C}_{(3c)}$ respectively, then, we get the desired result for $s(\mathcal{G}_{n,r}^1)$ as follows:
$$ s(\mathcal{G}_{n,r}^1)=\mathcal{C}_{(1)}\bigcup\mathcal{C}_{(2)}\bigcup\mathcal{C}_{(3a)}\bigcup\mathcal{C}_{(3b)}\bigcup\mathcal{C}_{(3c)}$$

\end{proof}

Our next result is the characterization of the $f$-vector of $\Delta_s(\mathcal{G}_{n,r}^1)$.

\begin{Proposition}\label{fsc}
  \em{Let $\Delta_s(\mathcal{G}_{n,r}^1)$ be a spanning simplicial complex of the graph $\mathcal{G}_{n,r}^1$, then the $dim(\Delta_s(\mathcal{G}_{n,r}^1))=n-r-1$ with $f-$vector $f(\Delta_s(\mathcal{G}_{n,r}^1))=(f_0,f_1,\hdots,f_{n-r-1})$ and\\
$  f_i=\left(
       \begin{array}{c}
         n \\
         i+1 \\
       \end{array}
     \right)+\sum\limits_{k=1}^{\tau}(-1)^k\\
\left[
                             \begin{array}{c}
                               {\sum\limits_{j=1}^{k}\sum\limits_{k_{s_j}=0}^{r-1}\sum\limits_{i_{s_j}=1
}^{r-k_{s_j}} \left(
                                       \begin{array}{c}
                                         n-\sum\limits_{j=1}^{k} m_{i_{s_j},i_{s_j}+1,\hdots,i_{s_j}+k_{s_j}}+\sum\limits_{u,v=1}^{k}\big|C_{i_{s_u},i_{s_u}+1,\hdots,i_{s_u}+k_{s_u}}\bigcap C_{i_{s_v},i_{s_v}+1,\hdots,i_{s_v}+k_{s_v}}\big| \\
                                         i+1-\sum\limits_{j=1}^{k} m_{i_{s_j},i_{s_j}+1,\hdots,i_{s_j}+k_{s_j}}+\sum\limits_{u,v=1}^{k}\big|C_{i_{s_u},i_{s_u}+1,\hdots,i_{s_u}+k_{s_u}}\bigcap C_{i_{s_v},i_{s_v}+1,\hdots,i_{s_v}+k_{s_v}}\big| \\
                                       \end{array}
                                     \right)}\\
                             \end{array}
                           \right]$
\\where $0\le i\le n-r-1$}\end{Proposition}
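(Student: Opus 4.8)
The plan is to identify the faces of $\Delta_s(\mathcal{G}_{n,r}^1)$ with the forests of $\mathcal{G}_{n,r}^1$, and then to count the forests of each cardinality by inclusion--exclusion over the cycles enumerated in Proposition \ref{cycles}.

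First I would record the elementary fact (already implicit in the cutting--down description of $s(G)$) that a subset $F\subseteq E$ is a face of $\Delta_s(\mathcal{G}_{n,r}^1)$ if and only if $F$ contains no cycle of $\mathcal{G}_{n,r}^1$. Indeed every facet is the edge set of a spanning tree, hence acyclic, so every face is acyclic; conversely, since $\mathcal{G}_{n,r}^1$ is connected, an acyclic $F$ can always be enlarged to the edge set of a spanning tree, so it lies in a facet. Moreover each spanning tree is obtained from $\mathcal{G}_{n,r}^1$ by deleting exactly $r$ edges (the cutting--down method), so every facet has cardinality $n-r$; hence $\dim\Delta_s(\mathcal{G}_{n,r}^1)=n-r-1$ and the $f$-vector has the asserted length.

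Next, $f_i$ equals $\binom{n}{i+1}$ minus the number of $(i+1)$-subsets of $E$ that fail to be acyclic. By Propositions \ref{cycles} and \ref{scn}, the cycles of $\mathcal{G}_{n,r}^1$ are exactly the $C_{i,i+1,\dots,i+k}$ with $0\le k\le r-1$ and $1\le i\le r-k$, of which there are $\tau=r(r+1)/2$, and $F$ contains a cycle precisely when $E(C_{i,\dots,i+k})\subseteq F$ for some such cycle. Since a family $D_1,\dots,D_k$ of cycles is simultaneously contained in $F$ iff $E(D_1)\cup\cdots\cup E(D_k)\subseteq F$, and the number of $(i+1)$-subsets of $E$ with this property is $\binom{\,n-|E(D_1)\cup\cdots\cup E(D_k)|\,}{\,i+1-|E(D_1)\cup\cdots\cup E(D_k)|\,}$, inclusion--exclusion over the $\tau$ cycles yields
\[
f_i=\binom{n}{i+1}+\sum_{k=1}^{\tau}(-1)^{k}\sum_{\{D_1,\dots,D_k\}}\binom{\,n-|E(D_1)\cup\cdots\cup E(D_k)|\,}{\,i+1-|E(D_1)\cup\cdots\cup E(D_k)|\,},
\]
the inner sum running over all $k$-element sets of cycles of $\mathcal{G}_{n,r}^1$. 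It then remains to evaluate $|E(D_1)\cup\cdots\cup E(D_k)|$ for $D_j=C_{i_{s_j},\dots,i_{s_j}+k_{s_j}}$: inclusion--exclusion on edge sets expresses it through $\sum_j|C_{i_{s_j},\dots,i_{s_j}+k_{s_j}}|$ and the mutual intersections, Proposition \ref{scn} (length) gives $|C_{i_{s_j},\dots,i_{s_j}+k_{s_j}}|=\sum_{\alpha=0}^{k_{s_j}}m_{i_{s_j}+\alpha}-2k_{s_j}$, and Proposition \ref{scn} (the intersection formula) supplies each $|C_{i_{s_u},\dots}\cap C_{i_{s_v},\dots}|$ by cases. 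Substituting these and re-indexing the outer sum over $k$-tuples of cycles as $\sum_{j=1}^{k}\sum_{k_{s_j}=0}^{r-1}\sum_{i_{s_j}=1}^{r-k_{s_j}}$ produces the displayed formula for $f_i$.

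The main obstacle is precisely the evaluation of $|E(D_1)\cup\cdots\cup E(D_k)|$: one must verify that, for every subfamily of cycles, the triple and higher intersections either are empty or coincide with a pairwise intersection, so that the nested inclusion--exclusion on edge sets telescopes into the pairwise expression $\sum_{u,v=1}^{k}|C_{i_{s_u},\dots}\cap C_{i_{s_v},\dots}|$ appearing in the statement. Because the interval cycles $C_{i,\dots,i+k}$ live inside the cycle space of the chain $\mathbb{C}_r^1$ and overlap in the rigidly structured way described in Proposition \ref{scn}, I would do this by the same case analysis used there, sorting the chosen cycles according to the relative position of their supporting intervals $[i_{s_j},i_{s_j}+k_{s_j}]$ in $\{1,\dots,r\}$ (disjoint, nested, or overlapping) and checking the collapse in each configuration; once this is in place, the remaining re-indexing and the simplification of the binomial arguments are routine bookkeeping.
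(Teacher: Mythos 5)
Your overall route is the same as the paper's: identify the faces of $\Delta_s(\mathcal{G}_{n,r}^1)$ with the cycle-free subsets of $E$, obtain $\dim\Delta_s(\mathcal{G}_{n,r}^1)=n-r-1$ from the fact that every spanning tree omits exactly $r$ edges, and compute $f_i$ by inclusion--exclusion over the $\tau=r(r+1)/2$ interval cycles $C_{i,\dots,i+k}$. Up to the identity
\[
f_i=\binom{n}{i+1}+\sum_{k=1}^{\tau}(-1)^{k}\sum_{\{D_1,\dots,D_k\}}\binom{n-|E(D_1)\cup\cdots\cup E(D_k)|}{i+1-|E(D_1)\cup\cdots\cup E(D_k)|}
\]
your argument is complete and agrees with what the paper does (the paper writes the same expansion, with the union size already replaced by the pairwise-corrected expression).

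The genuine gap is exactly the step you defer to ``the same case analysis'': the claim that $|E(D_1)\cup\cdots\cup E(D_k)|=\sum_j|D_j|-\sum_{u<v}|D_u\cap D_v|$, i.e.\ that all triple and higher intersections vanish or telescope into the pairwise terms. This is false for nested interval cycles, so the verification you promise cannot go through as described. Concretely, for $r\ge 3$ take $D_1=C_1$, $D_2=C_{1,2}$, $D_3=C_{1,2,3}$: then $E(D_1)\cap E(D_2)\cap E(D_3)=E(C_1)\setminus\{e_{11}\}$ has $m_1-1\ge 2$ edges, the true union has $m_1+m_2+m_3-2$ edges, while $\sum_j|D_j|-\sum_{u<v}|D_u\cap D_v|=m_2+m_3-1$, short by exactly the nonzero triple intersection. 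Hence the pairwise-truncated quantity that you (and the statement) substitute into the binomial coefficient does not equal the union size for such families; a correct argument must either retain the full inclusion--exclusion over all subfamilies of $\{D_1,\dots,D_k\}$ or prove a structural lemma restricting which families can occur with nonvanishing higher intersections. Be aware that the paper's own proof simply asserts the pairwise-corrected binomial coefficient without addressing this point, so the obstacle you flagged is real and is not closed there either; your proposal is more candid about it but does not resolve it.
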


\begin{proof}
  Let $E$ be the edge set of $\mathcal{G}_{n,r}^1$ and $\mathcal{C}_{(1)},\mathcal{C}_{(2)},\mathcal{C}_{(3a)}, \mathcal{C}_{(3b)}, \mathcal{C}_{(3c)}$ are disjoint classes of spanning trees of $\mathcal{G}_{n,r}^1$ then from lemma 3.2 we have

$$ s(\mathcal{G}_{n,r}^1)=\mathcal{C}_{(1)}\bigcup\mathcal{C}_{(2)}\bigcup\mathcal{C}_{(3a)}\bigcup\mathcal{C}_{(3b)}\bigcup\mathcal{C}_{(3c)}$$

Therefore, by definition 2.4 we can write
$$ \Delta_s (\mathcal{G}_{n,r}^1)=\Big\langle\mathcal{C}_{(1)}\bigcup\mathcal{C}_{(2)}\bigcup\mathcal{C}_{(3a)}\bigcup\mathcal{C}_{(3b)}\bigcup\mathcal{C}_{(3c)} \Big\rangle$$

Since each facet $\hat{E}_{(j_1i_1,j_2i_2,\hdots,j_ri_r)}=E(T_{(j_1i_1,j_2i_2,\hdots,j_ri_r)})$ is obtained by deleting exactly $r$ edges from the edge set of $\mathcal{G}_{n,r}^1$, keeping in view lemma 3.2, therefore dimension of each facet is same i.e., $n-r-1$ ( since $ |\hat{E}_{(j_1i_1,j_2i_2,\hdots,j_ri_r)}|=n-r$ ) and hence dimension of $\Delta_s(\mathcal{G}_{n,r}^1)$ will be $n-r-1$.\\
Also it is clear from the definition of $\Delta_s(\mathcal{G}_{n,r}^1)$ that it contains all those subsets of $E$ which do not contain the sets $\{e_{11},\hdots,e_{1m_1}\}$ and $\{e_{(i-1)1},e_{i1},\hdots,e_{im_i-1}\}$ for all $2\le i\le r$, i.e., those subsets of $E$ which do not contain any cycle in the graph $\mathcal{G}_{n,r}^1$.\\

Now by lemma 3.1 the total cycles in the graph $\mathcal{G}_{n,r}^1$ are $C_{i,i+1,\hdots,i+k}\;\;\;i\in\{1,2,\hdots,r-k\}\;and\; 0 \le k\le r-1$, and their total number is $\tau$. Let $F$ be any subset of $E$ of order $i+1$ such that it does not contain any $C_{i,i+1,\hdots,i+k}\;\;\;i\in\{1,2,\hdots,r-k\}\;and\; 0 \le k\le r-1$, in it. The total number of such $F$ is indeed $f_i$. We use inclusion exclusion principle to find this number. Therefore,\\
$f_i=$ Total number of subsets of $E$ of order $i+1$ not containing $C_{i,i+1,\hdots,i+k}; \;i\in\{1,2,\hdots,r-k\}\;and\; 0 \le k\le r-1$.\\

By Inclusion Exclusion Principle we have,\\

$f_i=\Big( $ Total number of subsets of $E$ of order $i+1\Big)- \Big(\sum\limits_{j=1}^{1}\sum\limits_{k_{s_j}=0}^{r-1}\sum\limits_{i_{s_j}=1}^{r-k{s_j}}$ number of subsets of $E$  of order $i+1$ containing $C_{i_{s_j},i_{s_j}+1,\hdots,i_{s_j}+k_{s_j}}\Big)+\Big(\sum\limits_{j=1}^{2}\sum\limits_{k_{s_j}=0}^{r-1}\sum\limits_{i_{s_j}=1}^{r-k_{s_j}}$ number of subsets of $E$ of order $i+1$ containing both $C_{i_{s_j},i_{s_j}+1,\hdots,i_{s_j}+k_{s_j}}\Big)- \cdots +(-1)^{\tau}\Big(\sum\limits_{j=1}^{\tau}\sum\limits_{k_{s_j}=0}^{r-1}\sum\limits_{i_{s_j}=1
}^{r-k_{s_j}}$ number of subsets of $E$ of order $i+1$ containing each $C_{i_{s_j},i_{s_j}+1,\hdots,i_{s_j}+k_{s_j}}\Big)$
\\
\\This implies\\
$f_i=\left(
       \begin{array}{c}
         n \\
         i+1 \\
       \end{array}
     \right)-\Big[\sum\limits_{j=1}^{1}\sum\limits_{k_{s_j}=0}^{r-1}\sum\limits_{i_{s_j}=1}^{r-k_{s_j}}\left(
                                       \begin{array}{c}
                                         n-m_{i_{s_1},i_{s_1}+1,\hdots,i_{s_1}+k_{s_1}} \\
                                         i+1-m_{i_{s_1},i_{s_1}+1,\hdots,i_{s_1}+k_{s_1}} \\
                                       \end{array}
                                     \right)
     \Big]+\\ \left[
                \begin{array}{c}
                  \sum\limits_{j=1}^{2}\sum\limits_{k_{s_j}=0}^{r-1}\sum\limits_{i_{s_j}=1}^{r-k_{s_j}}
                                       \left(
                                       \begin{array}{c}
                                         n-\sum\limits_{j=1}^{2} m_{i_{s_j},i_{s_j}+1,\hdots,i_{s_j}+k_{s_j}}+\sum\limits_{u,v=1}^{2}\big|C_{i_{s_u},i_{s_u}+1,\hdots,i_{s_u}+k_{s_u}}\bigcap C_{i_{s_v},i_{s_v}+1,\hdots,i_{s_v}+k_{s_v}}\big| \\
                                         i+1-\sum\limits_{j=1}^{2} m_{i_{s_j},i_{s_j}+1,\hdots,i_{s_j}+k_{s_j}}+\sum\limits_{u,v=1}^{2}\big|C_{i_{s_u},i_{s_u}+1,\hdots,i_{s_u}+k_{s_u}}\bigcap C_{i_{s_v},i_{s_v}+1,\hdots,i_{s_v}+k_{s_v}}\big| \\
                                       \end{array}
                                     \right) \\
                \end{array}
              \right]\\
 - \cdots+(-1)^{\tau}\\
\left[
                             \begin{array}{c}
                               {\sum\limits_{j=1}^{\tau}\sum\limits_{k_{s_j}=0}^{r-1}\sum\limits_{i_{s_j}=1
}^{r-k_{s_j}} \left(
                                       \begin{array}{c}
                                         n-\sum\limits_{j=1}^{\tau} m_{i_{s_j},i_{s_j}+1,\hdots,i_{s_j}+k_{s_j}}+\sum\limits_{u,v=1}^{\tau}\big|C_{i_{s_u},i_{s_u}+1,\hdots,i_{s_u}+k_{s_u}}\bigcap C_{i_{s_v},i_{s_v}+1,\hdots,i_{s_v}+k_{s_v}}\big| \\
                                         i+1-\sum\limits_{j=1}^{\tau} m_{i_{s_j},i_{s_j}+1,\hdots,i_{s_j}+k_{s_j}}+\sum\limits_{u,v=1}^{\tau}\big|C_{i_{s_u},i_{s_u}+1,\hdots,i_{s_u}+k_{s_u}}\bigcap C_{i_{s_v},i_{s_v}+1,\hdots,i_{s_v}+k_{s_v}}\big| \\
                                       \end{array}
                                     \right)}\\
                             \end{array}
                           \right]$
\\This implies\\
$  f_i=\left(
       \begin{array}{c}
         n \\
         i+1 \\
       \end{array}
     \right)+\sum\limits_{k=1}^{\tau}(-1)^k\\
\left[
                             \begin{array}{c}
                               {\sum\limits_{j=1}^{k}\sum\limits_{k_{s_j}=0}^{r-1}\sum\limits_{i_{s_j}=1
}^{r-k_{s_j}} \left(
                                       \begin{array}{c}
                                         n-\sum\limits_{j=1}^{k} m_{i_{s_j},i_{s_j}+1,\hdots,i_{s_j}+k_{s_j}}+\sum\limits_{u,v=1}^{k}\big|C_{i_{s_u},i_{s_u}+1,\hdots,i_{s_u}+k_{s_u}}\bigcap C_{i_{s_v},i_{s_v}+1,\hdots,i_{s_v}+k_{s_v}}\big| \\
                                         i+1-\sum\limits_{j=1}^{k} m_{i_{s_j},i_{s_j}+1,\hdots,i_{s_j}+k_{s_j}}+\sum\limits_{u,v=1}^{k}\big|C_{i_{s_u},i_{s_u}+1,\hdots,i_{s_u}+k_{s_u}}\bigcap C_{i_{s_v},i_{s_v}+1,\hdots,i_{s_v}+k_{s_v}}\big| \\
                                       \end{array}
                                     \right)}\\
                             \end{array}
                           \right]
$

\end{proof}

\begin{cor}
Let $\Delta_s(\mathcal{G}_{n,2}^1)$ be a spanning simplicial complex of a graph with $2$ cycles of lengths $m_1, m_2$ having one edge common, then the $dim(\Delta_s(\mathcal{G}_{n,2}^1))=n-3$ with $f-$vectors $f(\Delta_s(G_{n,2}^1))=(f_0,f_1,\ldots,f_{n-3})$ and
\\$  f_i=\left(
       \begin{array}{c}
         n \\
         i+1 \\
       \end{array}
     \right)-\Big[\left(
                    \begin{array}{c}
                      n-m_1 \\
                      i+1-m_1 \\
                    \end{array}
                  \right)+\left(
                    \begin{array}{c}
                      n-m_2 \\
                      i+1-m_2 \\
                    \end{array}
                  \right)+\left(
                    \begin{array}{c}
                      n-m_{1,2} \\
                      i+1-m_{1,2} \\
                    \end{array}
                  \right)\Big]+\\ \Big[\left(
                                      \begin{array}{c}
                                        n-m_1-m_2+|C_1\cap C_2| \\
                                        i+1-m_1-m_2+|C_1\cap C_2| \\
                                      \end{array}
                                    \right)+\left(
                                      \begin{array}{c}
                                        n-m_1-m_{1,2}+|C_1\cap C_{1,2}| \\
                                        i+1-m_1-m_{1,2}+|C_1\cap C_{1,2}| \\
                                      \end{array}
                                    \right)+\\ \left(
                                      \begin{array}{c}
                                        n-m_2-m_{1,2}+|C_2\cap C_{1,2}| \\
                                        i+1-m_2-m_{1,2}+|C_2\cap C_{1,2}| \\
                                      \end{array}
                                    \right)
\Big]+\\ \Big[\left(
                                      \begin{array}{c}
                                        n-m_1-m_2-m_{1,2}+|C_1\cap C_2|+|C_1\cap C_{1,2}|+|C_2\cap C_{1,2}| \\
                                        i+1-m_1-m_2-m_{1,2}+|C_1\cap C_2|+|C_1\cap C_{1,2}|+|C_2\cap C_{1,2}| \\
                                      \end{array}
                                    \right)
\Big]$
\\where $0\le i\le n-3.$
\end{cor}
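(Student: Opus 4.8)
The plan is to read off the corollary as the case $r=2$ of Proposition \ref{fsc}, after making the cycle data of $\mathcal{G}_{n,2}^1$ explicit. First I would record that, by Proposition \ref{cycles}, $\tau(\mathcal{G}_{n,2}^1)=\tfrac{2\cdot 3}{2}=3$, and that the list of cycles produced there is exactly $C_1,C_2$ (the two base cycles, of lengths $m_1,m_2$) together with the single long cycle $C_{1,2}$ obtained by deleting the common edge $e_{11}$; by the length formula (Proposition \ref{scn}) one has $m_{1,2}=|C_{1,2}|=m_1+m_2-2$. Since $r=2$, the dimension statement $\dim\Delta_s(\mathcal{G}_{n,2}^1)=n-r-1=n-3$ is immediate from Proposition \ref{fsc}.

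For the $f$-vector I would specialise the inclusion--exclusion count used in the proof of Proposition \ref{fsc} to this three-element family of cycles: $f_i$ is the number of $(i+1)$-subsets of $E$ containing none of $C_1,C_2,C_{1,2}$, so it equals $\binom{n}{i+1}$, minus the number of $(i+1)$-subsets of $E$ containing some single one of these cycles, plus the number containing some pair of them, minus the number containing all three. As in Proposition \ref{fsc}, the number of $(i+1)$-subsets of $E$ containing a fixed edge set of size $\ell$ is $\binom{n-\ell}{i+1-\ell}$; here $\ell=m_C$ for a single cycle $C$, $\ell=m_C+m_{C'}-|C\cap C'|$ for a pair $\{C,C'\}$ (by inclusion--exclusion on edge sets), and $\ell=m_1+m_2+m_{1,2}-|C_1\cap C_2|-|C_1\cap C_{1,2}|-|C_2\cap C_{1,2}|+|C_1\cap C_2\cap C_{1,2}|$ for the triple. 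Substituting these three collections of binomial coefficients into the alternating sum, and using that $C_1\cap C_2=\{e_{11}\}$ while $e_{11}\notin C_{1,2}$, so that $C_1\cap C_2\cap C_{1,2}=\emptyset$, produces exactly the displayed formula (the last summand then has upper entry $n-m_1-m_2-m_{1,2}+|C_1\cap C_2|+|C_1\cap C_{1,2}|+|C_2\cap C_{1,2}|$ with no further correction term).

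I do not expect a genuine obstacle here: the corollary is a direct bookkeeping consequence of Proposition \ref{fsc}, and the only steps that are not completely routine are (i) confirming, via Proposition \ref{cycles}, that $\{C_1,C_2,C_{1,2}\}$ really is the complete list of cycles, so that the alternating sum of Proposition \ref{fsc} truncates after the $k=3$ term, and (ii) checking that the three-fold intersection of these cycles is empty, which is what lets the upper entry of the last binomial coefficient be written in the stated form. With those two observations in hand, the rest is substitution and collection of terms.
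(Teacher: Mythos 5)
Your proposal is correct and follows essentially the same route as the paper, which states the corollary as a direct specialization of Proposition \ref{fsc} to $r=2$ (so $\tau=3$ and the cycle list is $C_1$, $C_2$, $C_{1,2}$) without further argument. Your extra observation that $C_1\cap C_2\cap C_{1,2}=\emptyset$ (since the common edge $e_{11}$ is removed in forming $C_{1,2}$), which justifies the absence of a triple-intersection correction in the last binomial coefficient, is a point the paper leaves implicit but is exactly the right check to make.
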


For a simplicial complex $\Delta$ over $[n]$, one would associate to it the
Stanley-Reisner ideal, that is, the monomial ideal $I_{\mathcal N}(\Delta)$ in
$S=k[x_1, x_2,\ldots ,x_n]$ generated by monomials corresponding to
non-faces of this complex (here we are assigning one variable of the
polynomial ring to each vertex of the complex). It is well known
that the face ring $k[\Delta]=S/I_{\mathcal N}(\Delta)$
is a standard graded algebra. We refer the readers to \cite{HP} and
\cite{Vi} for more details about graded algebra $A$, the Hilbert
function $H(A,t)$ and the Hilbert series $H_t(A)$ of a graded algebra.

Our main result of this section is as follows;
\begin{Theorem}\label{Hil} {\em Let $\Delta_s(\mathcal{G}_{n,r}^1) $ be the spanning simplicial complex of
$\mathcal{G}_{n,r}^1$, then the Hilbert series of the face ring
$k\big[\Delta_s(\mathcal{G}_{n,r}^1)\big]$ is given by,\\
$H(k[\Delta_s(\mathcal{G}_{n,r}^1)],t)=1+\sum\limits_{i=0}^{d}\frac{{n\choose
{i+1}}{t^{i+1}}}{(1-t)^{i+1}}+\sum\limits_{i=0}^{d}\sum\limits_{k=1}^{\tau}(-1)^k\\
\tiny{
 \Big[
                             \begin{array}{c}
                               {\sum\limits_{j=1}^{k}\sum\limits_{k_{s_j}=0}^{r-1}\sum\limits_{i_{s_j}=1
}^{r-k_{s_j}} \left(
                                       \begin{array}{c}
                                         n-\sum\limits_{j=1}^{k} m_{i_{s_j},i_{s_j}+1,\hdots,i_{s_j}+k_{s_j}}+\sum\limits_{u,v=1}^{k}\big|C_{i_{s_u},i_{s_u}+1,\hdots,i_{s_u}+k_{s_u}}\bigcap C_{i_{s_v},i_{s_v}+1,\hdots,i_{s_v}+k_{s_v}}\big| \\
                                         i+1-\sum\limits_{j=1}^{k} m_{i_{s_j},i_{s_j}+1,\hdots,i_{s_j}+k_{s_j}}+\sum\limits_{u,v=1}^{k}\big|C_{i_{s_u},i_{s_u}+1,\hdots,i_{s_u}+k_{s_u}}\bigcap C_{i_{s_v},i_{s_v}+1,\hdots,i_{s_v}+k_{s_v}}\big| \\
                                       \end{array}
                                     \right)}\\
                             \end{array}
                           \Big]} \frac{t^{i+1}}{(1-t)^{i+1}}$}
\end{Theorem}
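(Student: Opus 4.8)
The plan is to combine the standard description of the Hilbert series of a Stanley--Reisner ring in terms of the $f$-vector with the explicit formula for the $f$-vector obtained in Proposition \ref{fsc}. Recall that for any simplicial complex $\Delta$ of dimension $d$ with $f$-vector $(f_0,f_1,\dots,f_d)$ one has
$$H(k[\Delta],t)=\sum_{i=-1}^{d} f_i\,\frac{t^{i+1}}{(1-t)^{i+1}},$$
where by convention $f_{-1}=1$ records the empty face. This holds because a monomial of $S=k[x_1,\dots,x_n]$ survives in $k[\Delta]=S/I_{\mathcal N}(\Delta)$ precisely when its support is a face of $\Delta$, and the monomials with a fixed support $F$ of cardinality $i+1$ contribute the generating function $t^{i+1}/(1-t)^{i+1}$; summing over the faces of $\Delta$ and grouping by dimension gives the displayed identity (see \cite{HP}, \cite{Vi}).

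Next I would feed in Proposition \ref{fsc}: it gives $d=\dim\Delta_s(\mathcal{G}_{n,r}^{1})=n-r-1$ and, for $0\le i\le n-r-1$,
$$f_i=\binom{n}{i+1}+\sum_{k=1}^{\tau}(-1)^k\,A_{k,i},$$
where $A_{k,i}$ abbreviates the triple sum of binomial coefficients displayed in that proposition. Substituting into the Hilbert-series formula above and pulling out the $i=-1$ term $f_{-1}=1$, then distributing the summation over the two pieces of $f_i$, yields
$$H(k[\Delta_s(\mathcal{G}_{n,r}^{1})],t)=1+\sum_{i=0}^{d}\binom{n}{i+1}\frac{t^{i+1}}{(1-t)^{i+1}}+\sum_{i=0}^{d}\sum_{k=1}^{\tau}(-1)^k A_{k,i}\,\frac{t^{i+1}}{(1-t)^{i+1}},$$
which, after unabbreviating $A_{k,i}$, is exactly the asserted expression.

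Since the computation is essentially a substitution, there is no deep obstacle here; the substantive combinatorial work — the inclusion–exclusion count of faces avoiding every cycle $C_{i,i+1,\dots,i+k}$ — was already carried out in Proposition \ref{fsc}. What remains requires only bookkeeping care: one must check that the binomial-coefficient convention is applied consistently (a symbol $\binom{m}{j}$ with $j<0$ is read as $0$, matching the absence of subsets of $E$ of negative cardinality that contain a prescribed family of cycles), so that truncating the outer index at $i=d=n-r-1$ discards nothing and the inclusion–exclusion remains valid term by term. The only mild subtlety is invoking $f_{-1}=1$ correctly — the empty set is a face of every simplicial complex — which is precisely what produces the leading summand $1$; everything else is the verbatim content of Proposition \ref{fsc}.
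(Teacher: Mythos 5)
Your proposal is correct and follows essentially the same route as the paper: invoke the standard identity $H(k[\Delta],t)=1+\sum_{i=0}^{d}f_i\,t^{i+1}/(1-t)^{i+1}$ for the face ring of a simplicial complex and substitute the $f$-vector computed in Proposition \ref{fsc}. Your additional remarks on the convention $f_{-1}=1$ and on vanishing binomial coefficients are sensible bookkeeping but do not change the argument.
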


\begin{proof}
From \cite{Vi}, we know that if $\Delta$ is a simplicial complex of
dimension $d$ and $f(\Delta)=(f_0, f_1, \ldots,f_d)$ its $f$-vector,
then the Hilbert series of face ring $k[\Delta]$ is given
by $$H(k[\Delta],t)= 1+\sum_{i=0}^{d}\frac{f_i
t^{i+1}}{(1-t)^{i+1}}.$$ By substituting the values of $f_i$'s from
Proposition \ref{fsc} in this above expression, we get the desired result.
\end{proof}

\section{Associated primes of the facet ideal $I_{\mathcal{F}}(\Delta_s(\mathcal{G}_{n,r}^1))$  }

We present the characterization of all associated
primes of the facet ideal $I_{\mathcal{F}}(\Delta_s(\mathcal{G}_{n,r}^1))$ of
spanning simplicial complex $\Delta_s(\mathcal{G}_{n,r}^1)$ in this section.

Associated to a simplicial complex $\Delta$ over $[n]$, one defines {\em the facet
ideal} $I_{\mathcal{F}}(\Delta)\subset S$, which is
generated by square-free monomials $x_{i1} \ldots x_{is}$, where
$\{v_{i1} ,\ldots, v_{is}\}$ is a facet of $\Delta$.

\begin{Lemma}\label{Ass}{\em
If $\Delta_s(\mathcal{G}_{n,r}^1)$ be the spanning simplicial complex of the
$r$-cycles graph $\mathcal{G}_{n,r}^1$, then

$$I_{\mathcal{F}}(\Delta_s(\mathcal{G}_{n,r}^1))=\left(\bigcap_{e_t\not\in C_{i}}^{ 1 \leq i\leq r}(x_t)\right)\bigcap \left(\bigcap_{ 2\leq i_\alpha\neq i_\beta\leq m_{j_\alpha}-1}^{2\leq j_\alpha\leq r-1}(x_{j_\alpha i_\alpha},x_{j_\alpha i_\beta})\right)$$
$$\bigcap\left(\bigcap_{2\leq i_\alpha\neq i_\beta\leq m_1}\left(x_{1i_\alpha},x_{1i_\beta,} \right)\right)\bigcap\left(\bigcap_{1\leq i_\alpha\neq i_\beta\leq m_r-1}\left(x_{ri_\alpha},x_{ri_\beta,} \right)\right)$$ }
\end{Lemma}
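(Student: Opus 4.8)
The natural approach is through the dictionary, developed in \cite{F1}, between the primary decomposition of a facet ideal and the vertex covers of the complex. For any simplicial complex $\Delta$ with facets $F_1,\dots,F_q$, a monomial prime $(x_i : i\in A)$ contains $I_{\mathcal F}(\Delta)$ if and only if $A\cap F_j\neq\emptyset$ for every $j$, i.e.\ if and only if $A$ is a vertex cover; and since $I_{\mathcal F}(\Delta)$ is a squarefree, hence radical, monomial ideal, this gives the irredundant decomposition $I_{\mathcal F}(\Delta)=\bigcap_A(x_i : i\in A)$, the intersection running over the \emph{minimal} vertex covers $A$, with $\Ass\big(S/I_{\mathcal F}(\Delta)\big)$ consisting precisely of these primes. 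Thus the whole problem reduces to enumerating the minimal vertex covers of $\Delta_s(\mathcal G_{n,r}^1)$ and matching them against the four families on the right-hand side.

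The first step of that enumeration is a combinatorial translation: $A\subseteq E$ is a vertex cover of $\Delta_s(G)$ exactly when it meets every spanning-tree edge set, that is, when $E\setminus A$ contains no spanning tree, that is, when the spanning subgraph with edge set $E\setminus A$ is disconnected. Hence the minimal vertex covers of $\Delta_s(\mathcal G_{n,r}^1)$ are precisely the minimal edge cuts of $\mathcal G_{n,r}^1$. Two of the claimed families then fall out of the structure of $\mathcal G_{n,r}^1$ together with the cutting-down description in Lemma \ref{scn}. First, every bridge is a minimal cut; since every edge of the chain $\mathbb C_r^1$ lies on some base cycle $C_i$ whereas $\mathcal G_{n,r}^1\setminus\mathbb C_r^1$ is a forest attached to $\mathbb C_r^1$ in a tree-like way, the bridges are exactly the forest edges $e_1,\dots,e_t$, which accounts for the block $\bigcap_{e_t\notin C_i}(x_t)$. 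Second, for a single base cycle $C_j$ the deletion of two of its \emph{non-common} edges splits $C_j$ into two arcs, one of which carries no glued cycle and cannot be reconnected through the forest, so such a pair is a minimal cut, while the deletion of a single edge of any cycle never disconnects the graph. Reading off from equation (1) which entry $e_{j1}$ of each cycle's edge list is the common one --- no common entry for $C_r$, the edge shared with $C_2$ for $C_1$, and the edge shared with $C_{j+1}$ for an interior $C_j$ --- converts this observation into the three pair-ideal blocks with exactly the stated index ranges.

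The step I expect to be the main obstacle is the \emph{converse}: proving that the list so obtained is complete, i.e.\ that every minimal edge cut of $\mathcal G_{n,r}^1$ is either a forest edge $e_t$ or a pair of non-common edges of a single base cycle. This requires examining cuts that delete one or more common edges $e_{i1}$; by the reasoning behind Lemma \ref{scn}, deleting a common edge forces any further deletions into the two cycles flanking it, and one must verify --- case by case on the positions of the deleted common edges along the chain, using the intersection numbers of the compound cycles $C_{i,\dots,i+k}$ computed earlier --- that the minimal cuts arising this way do not lie outside the four listed families, equivalently that the associated monomial primes are already redundant in (or coincide with members of) the proposed intersection. Care is needed here, because deleting a common edge together with one non-common edge from each of its two flanking cycles also disconnects $\mathcal G_{n,r}^1$, so one must decide whether such a cut is minimal. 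Once this case analysis is settled --- organised just as in the proof of Lemma \ref{scn} --- assembling the monomial primes into the four intersections is a routine matter.
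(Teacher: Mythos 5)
Your route is the same one the paper takes: invoke Faridi's correspondence between minimal primes of a facet ideal and minimal vertex covers, then enumerate the minimal vertex covers of $\Delta_s(\mathcal{G}_{n,r}^1)$. Your translation of vertex covers into edge cuts ($A$ covers $\Delta_s(G)$ iff $E\setminus A$ contains no spanning tree iff $E\setminus A$ is disconnected) is correct and is in fact a sharper formulation than anything in the paper, and you are more candid than the published proof, which only verifies that the listed singletons and pairs \emph{are} minimal vertex covers and never addresses whether the list is complete.

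However, the step you defer --- completeness, and in particular the case of cuts through a common edge --- is not merely the hard step; it is a step that fails, so the proposal cannot be closed as written. Take $\mathcal{G}_{10,2}^1$ of Fig.~1 and $A=\{e_{11},e_{12},e_{21}\}$: the common edge together with one non-common edge from each of the two cycles flanking it, which is exactly the configuration you flag as ``needing care.'' Each of the pairs $\{e_{11},e_{12}\}$, $\{e_{11},e_{21}\}$, $\{e_{12},e_{21}\}$ is the complement of a spanning tree appearing in the paper's own list of $s(\mathcal{G}_{10,2}^1)$, so no proper subset of $A$ is a vertex cover; but $A$ itself has three elements while every co-tree has two, so $A$ meets every facet. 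Hence $A$ is a minimal vertex cover and $(x_{11},x_{12},x_{21})$ is a minimal prime of $I_{\mathcal{F}}(\Delta_s(\mathcal{G}_{10,2}^1))$ belonging to none of the four families on the right-hand side. Since a squarefree monomial ideal is the intersection of \emph{all} of its minimal primes and none is redundant, the displayed intersection is strictly larger than $I_{\mathcal{F}}$: concretely, $F=\{e_1,e_2,e_3,e_4,e_{13},e_{22},e_{23}\}$ meets every listed cover, so $x_F$ lies in the right-hand side, yet $x_F$ has degree $7$ while every generator of $I_{\mathcal{F}}$ has degree $8$. So the case analysis you postpone cannot be settled in favour of the statement; the minimal cuts passing through common edges (in general, a string of consecutive common edges plus one non-common edge from each compound cycle they expose) contribute genuinely new components, and either these families must be added to the decomposition or the statement must be amended before any proof --- yours or the paper's --- can go through.
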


\begin{proof} Consider the spanning simplicial complex
$\Delta_s(\mathcal{G}_{n,r}^1)$ and let
$I_{\mathcal{F}}(\Delta_s(\mathcal{G}_{n,r}^1))$ be the facet ideal of
$\Delta_s(\mathcal{G}_{n,r}^1)$. Since from \cite[Proposition 1.8]{F1}, we know
that a minimal prime ideal of the facet ideal
$I_{\mathcal{F}}(\Delta)$ has one to one correspondence with the
minimal vertex cover of the simplicial complex. Therefore, in order
to compute the primary decomposition of the facet ideal
$I_{\mathcal{F}}(\Delta_s(\mathcal{G}_{n,r}^1))$; it is sufficient to compute
all the minimal vertex cover of $\Delta_s(\mathcal{G}_{n,r}^1)$.\\

Indeed clear from the definition of $\Delta_s(\mathcal{G}_{n,r}^1)$ and by Lemma \ref{scn} that $\{e_t\}$ is a minimal vertex cover of $\Delta_s(\mathcal{G}_{n,r}^1)$ such that $e_t\not\in
C_{i} \;\forall\; i\in \{1,\ldots, r\}$ . Moreover, $\{e_{j_\alpha i_\alpha},e_{j_\alpha i_\beta}\}$ is also a minimal vertex cover of $\Delta_s(\mathcal{G}_{n,r}^1)$ with
$2\leq i_\alpha\neq i_\beta\leq  m_{j_\alpha}-1$ for $ j_\alpha\in\{2,\hdots, r-1\}$ , $2\leq i_\alpha\neq i_\beta\leq  m_1$ for $ j_\alpha=1$ and $1\leq i_\alpha\neq i_\beta\leq  m_r-1$ for $j_\alpha=r$. Indeed for any $\hat{E}_{(j_1i_1,j_2i_2,\hdots, j_ri_r)}\in s(\mathcal{G}_{n,r}^1)$ the intersection $\{e_{j_\alpha i_\alpha},e_{j_\alpha i_\beta}\}\cap \hat{E}_{(j_1i_1,j_2i_2,\hdots, j_ri_r)} $ is nonempty.
\end{proof}
\section{Cohen-Macaulayness of the face ring of $\Delta_s(\mathcal{G}_{n,r}^1)$}
In this section, we include some definitions and results from \cite{AR} and use them to show that the face ring of $\Delta_s(\mathcal{G}_{n,r}^1)$ is Cohen-Macaulay.
\begin{Definition}\label{qlq}\cite{AR}\\
\em{Let $I\subset S=k[x_1,x_2,\hdots,x_n]$ be a monomial ideal, we
say that $I$ will have the \textit{quasi-linear quotients}, if there
exists a minimal monomial system of generators $m_1,m_2,\hdots,m_r$
such that $mindeg(\hat{I}_{m_i})=1$ for all $1<i\le r$, where
$$\hat{I}_{m_i}=(m_1,m_2,\hdots,m_{i-1}):(m_i).$$}
\end{Definition}
\begin{Theorem}\cite{AR}
\em{Let $\Delta$ be a pure simplicial complex of dimension $d$ over
$[n]$. Then $\Delta$ will be a shellable simplicial complex if and
only if $I_{\mathcal{F}}(\Delta)$ will have the quasi-linear
quotients.}
\end{Theorem}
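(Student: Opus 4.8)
The plan is to translate both properties into combinatorial conditions on an ordering $F_1,\dots,F_r$ of the facets of $\Delta$ and then match them. Write $m_i=\prod_{v\in F_i}x_v$ for the generator of $I_{\mathcal F}(\Delta)$ attached to $F_i$. Since the $m_i$ are squarefree, $m_j/\gcd(m_j,m_i)$ is the squarefree monomial supported on $F_j\setminus F_i$, so
$$\hat I_{m_i}=(m_1,\dots,m_{i-1}):(m_i)=\bigl(x_{F_j\setminus F_i}\ :\ 1\le j<i\bigr),$$
and, because $\Delta$ is pure ($|F_j|=|F_i|=d+1$, hence $|F_j\setminus F_i|=|F_i\setminus F_j|$), we get $\operatorname{mindeg}(\hat I_{m_i})=\min_{j<i}|F_i\setminus F_j|$. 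Therefore $I_{\mathcal F}(\Delta)$ has quasi-linear quotients with respect to $F_1,\dots,F_r$ exactly when for each $i\ge 2$ there is some $j<i$ with $|F_i\cap F_j|=d$, i.e.\ $F_i$ meets an earlier facet in a codimension-one face.

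For the implication shellable $\Rightarrow$ quasi-linear quotients I would take a shelling order $F_1,\dots,F_r$ of $\Delta$: by definition of shellability in the pure case, $\langle F_1,\dots,F_{i-1}\rangle\cap\langle F_i\rangle$ is pure of dimension $d-1$ for every $i\ge 2$, in particular non-empty, so it has a facet of dimension $d-1$, which is $F_i\cap F_j$ for some $j<i$ with $|F_i\cap F_j|=d$. By the dictionary above this is precisely the quasi-linear quotients condition for the same ordering, so this direction is immediate.

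For the converse I would argue by induction on $r$, using a minimal generating ordering $m_1,\dots,m_r$ that realises quasi-linear quotients. Put $\Delta'=\langle F_1,\dots,F_{r-1}\rangle$; the colon ideals through step $r-1$ are unchanged, so $I_{\mathcal F}(\Delta')$ still has quasi-linear quotients and, by the inductive hypothesis, $\Delta'$ is shellable and hence Cohen--Macaulay. Fixing any shelling of $\Delta'$, I claim that appending $F_r$ produces a shelling of $\Delta$; the only thing to verify is that $\Delta'\cap\langle F_r\rangle$ is pure of dimension $d-1$ (a condition independent of the chosen shelling of $\Delta'$). The quasi-linear condition at step $r$ gives $j_0<r$ with $F_r\setminus F_{j_0}=\{v_0\}$, so $F_r\cap F_{j_0}=F_r\setminus\{v_0\}$ is a $(d-1)$-face of the intersection, which is thus non-empty of dimension $d-1$. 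The substantive point, and the step I expect to be the main obstacle, is \emph{purity}: every face $G\subseteq F_r$ with $G\in\Delta'$ must be contained in a codimension-one face of $F_r$ that still lies in $\Delta'$. When $v_0\notin G$ this is automatic, because $G\subseteq F_r\setminus\{v_0\}=F_r\cap F_{j_0}\in\Delta'$; the hard case is $v_0\in G$, which I would handle by exploiting the Cohen--Macaulayness of $\Delta'$ — so that $\lk_{\Delta'}(G)$ is pure — together with an exchange argument on the vertices of $F_r\setminus G$ to enlarge $G$ to the required codimension-one face. Once purity is established, appending $F_r$ to a shelling of $\Delta'$ gives a shelling of $\Delta$, closing the induction.
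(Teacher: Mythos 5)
This statement is quoted from \cite{AR} and used as a black box; the paper contains no proof of it, so there is nothing internal to compare your argument with and I can only assess it on its own terms. Your opening dictionary is correct: for a pure $d$-dimensional $\Delta$ the colon ideal $(m_1,\dots,m_{i-1}):(m_i)$ is generated by the monomials $x_{F_j\setminus F_i}$, purity gives $|F_j\setminus F_i|=|F_i\setminus F_j|$, and so $\operatorname{mindeg}(\hat I_{m_i})=1$ for all $i\ge 2$ says exactly that each facet meets \emph{some} earlier facet in a face of dimension $d-1$. Your proof of the implication (shellable $\Rightarrow$ quasi-linear quotients) is complete and correct.

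The converse is where you rightly sense trouble, and the trouble is fatal rather than technical: the purity step you defer genuinely fails, because with Definition 5.1 quasi-linear quotients of $I_{\mathcal F}(\Delta)$ is, by your own reformulation, equivalent to connectedness of the dual graph of $\Delta$ (order the facets along a spanning tree of that graph), and connectedness of the dual graph does not imply shellability. Concretely, let $\Delta=\langle\{1,2,3\},\{2,3,4\},\{3,4,5\},\{1,4,5\}\rangle$. With this ordering the successive colon ideals are $(x_1)$, $(x_1x_2,x_2)=(x_2)$ and $(x_2x_3,x_2x_3,x_3)=(x_3)$, so $I_{\mathcal F}(\Delta)$ has quasi-linear quotients (each $\hat I_{m_i}$ is even generated by a single variable); yet the $1$-cycle supported on the edges $12,24,45,15$ is not a boundary (the edges $13$, $35$, $14$ each lie in a unique triangle, forcing the corresponding coefficients of any bounding $2$-chain to vanish), so $\tilde H_1(\Delta)\neq 0$, $\Delta$ is not Cohen--Macaulay, and hence not shellable. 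In this example your induction runs exactly as you describe up to the last step --- $\Delta'=\langle\{1,2,3\},\{2,3,4\},\{3,4,5\}\rangle$ is shellable --- and then $\Delta'\cap\langle\{1,4,5\}\rangle=\langle\{1\},\{4,5\}\rangle$ is not pure, and no reordering helps since $\Delta$ admits no shelling at all. So the case $v_0\in G$ cannot be repaired by any exchange argument: the statement, with $\operatorname{mindeg}$ as in Definition 5.1, is false in the direction you are trying to prove. The condition that is actually equivalent to shellability is the stronger one that every $F_i\cap F_j$ ($j<i$) be contained in some $F_i\cap F_k$ ($k<i$) of dimension $d-1$, i.e.\ that each $\hat I_{m_i}$ be generated by variables dominating all the $x_{F_j\setminus F_i}$, not merely contain one variable.
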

\begin{cor}\label{frcm}\cite{AR}
\em{ If the facet ideal $I_{\mathcal{F}}(\Delta)$ of a pure
simplicial complex $\Delta$ over $[n]$ has quasi-linear quotients,
then the face ring is Cohen Macaulay.}
\end{cor}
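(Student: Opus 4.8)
The plan is to obtain the Cohen--Macaulay property as a direct consequence of the preceding Theorem, by passing through the notion of shellability. The preceding Theorem asserts, for a pure simplicial complex $\Delta$ of dimension $d$ over $[n]$, the equivalence between $\Delta$ being shellable and its facet ideal $I_{\mathcal{F}}(\Delta)$ having quasi-linear quotients. The first step is therefore to feed the hypothesis of the corollary into one direction of this equivalence: since $\Delta$ is pure and $I_{\mathcal{F}}(\Delta)$ is assumed to have quasi-linear quotients, the preceding Theorem yields that $\Delta$ is a pure shellable simplicial complex.

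The second step is to invoke the classical bridge between combinatorics and commutative algebra, namely that a pure shellable simplicial complex has a Cohen--Macaulay Stanley--Reisner ring. A shelling order $F_1,\ldots,F_s$ on the facets of $\Delta$ partitions its face poset into the intervals determined by the restriction face of each facet, and in the pure case each link is again pure shellable and hence carries the reduced homology of a wedge of top-dimensional spheres. This is precisely what Reisner's homological criterion requires, so $k[\Delta]=S/I_{\mathcal{N}}(\Delta)$ is Cohen--Macaulay. Applying this to the complex produced in the first step gives the assertion of the corollary.

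The genuine content is carried by the preceding Theorem, which performs the real translation from the algebraic quasi-linear-quotients condition on $I_{\mathcal{F}}(\Delta)$ to the combinatorial existence of a shelling; once that is in hand, the corollary is immediate. Accordingly there is no substantial obstacle here, and the only point meriting attention is a compatibility of hypotheses: the shellability produced by the Theorem must be of the pure type demanded by the classical shellable $\Rightarrow$ Cohen--Macaulay implication. Since $\Delta$ is assumed pure throughout, this holds automatically, and the proof concludes.
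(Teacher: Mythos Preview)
The paper does not supply a proof of this corollary at all: it is quoted verbatim from \cite{AR}, together with the preceding Definition and Theorem, and is used as a black box. Your argument---apply the preceding Theorem to pass from quasi-linear quotients of $I_{\mathcal{F}}(\Delta)$ to shellability of the pure complex $\Delta$, then invoke the classical implication ``pure shellable $\Rightarrow$ Cohen--Macaulay face ring'' (via Reisner's criterion or directly, as in \cite{BH})---is correct and is exactly the intended derivation of the corollary from the theorem in the source \cite{AR}. There is nothing to compare against in the present paper, and your reconstruction is the standard one.
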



\begin{Theorem}\label{CM}
\em{The face ring of $\Delta_s(\mathcal{G}_{n,r}^1)$ is
Cohen-Macaulay.}
\end{Theorem}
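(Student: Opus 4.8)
The plan is to verify that the facet ideal $I_{\mathcal{F}}(\Delta_s(\mathcal{G}_{n,r}^1))$ has quasi-linear quotients and then invoke Corollary \ref{frcm}. First I would record that $\Delta_s(\mathcal{G}_{n,r}^1)$ is \emph{pure} of dimension $n-r-1$: this is exactly Proposition \ref{fsc}, since every spanning tree of $\mathcal{G}_{n,r}^1$ is obtained by deleting precisely $r$ edges (Lemma \ref{scn}), so every facet has cardinality $n-r$. With purity in hand, Corollary \ref{frcm} reduces the Cohen--Macaulay claim to producing a minimal monomial generating set $m_1,\dots,m_p$ of $I_{\mathcal{F}}(\Delta_s(\mathcal{G}_{n,r}^1))$ for which $\mathrm{mindeg}\bigl((m_1,\dots,m_{i-1}):(m_i)\bigr)=1$ for every $i>1$, i.e. each $m_i$ differs from some earlier $m_j$ by a single variable after cancelling common factors.

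Next I would fix the ordering of the generators. Each generator corresponds to a spanning tree $\hat{E}_{(j_1i_1,\dots,j_ri_r)}$, equivalently to the $r$-element deleted set $\{e_{j_1i_1},\dots,e_{j_ri_r}\}$, and I would order the monomials degree-lexicographically with respect to the edge labelling in equation $(1)$, placing the ``generic'' trees of class $\mathcal{C}_{(1)}$ first and the trees in which common edges are deleted ($\mathcal{C}_{(2)},\mathcal{C}_{(3a)},\mathcal{C}_{(3b)},\mathcal{C}_{(3c)}$) afterwards, consistently with the nesting structure of the chain $\mathbb{C}_r^1$. The key combinatorial fact to check is the following ``exchange property'': given any facet $F$ (other than the first) and the set $D_F = E\setminus F$ of deleted edges, there is an earlier facet $F'$ whose deleted set $D_{F'}$ agrees with $D_F$ except in one coordinate — concretely, replace the deleted edge lying in the last cycle (or the last common edge deleted) by a suitable edge of smaller label in the same cycle $C_{j_\alpha}$, respecting the admissibility constraints of Lemma \ref{scn} (at most two edges deleted from any single cycle $C_{i,i+1,\dots,i+k}$, and when a common edge is removed only one non-common edge may be removed from each side). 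Then $m_{F'}\mid \mathrm{lcm}(m_{F'},m_F)$ gives $m_{F'}:m_F = x_t$ for the single swapped variable $e_t$, so $\mathrm{mindeg}(\hat I_{m_F})=1$.

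The main obstacle will be handling the overlapping cycles cleanly: because consecutive cycles share an edge and the ``merged'' cycles $C_{i,\dots,i+k}$ are themselves cycles of the graph, deleting a common edge $e_{j_\alpha 1}$ couples the choices on both neighbouring cycles, and one must be careful that the proposed earlier facet $F'$ is still a genuine spanning tree (no residual cycle, still connected, still $n-r$ edges) and is genuinely earlier in the order. I would organize this by induction on $r$ and, within the step, a case analysis mirroring the five classes $\mathcal{C}_{(1)},\dots,\mathcal{C}_{(3c)}$ of Lemma \ref{scn}: in class $\mathcal{C}_{(1)}$ the exchange is immediate (lower the index $i_r$ in the last cycle $C_r$); in the classes where a block of consecutive common edges $e_{j_{r_1}1},\dots,e_{j_{r_2}1}$ is deleted, one first reduces to the generic situation on the complementary cycles and then adjusts the single non-common deletion forced inside the merged cycle $C_{j_{r_1},\dots,j_{r_2}}$. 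Once the exchange property is established for every facet beyond the first, quasi-linear quotients follow, and hence, by Corollary \ref{frcm}, $k[\Delta_s(\mathcal{G}_{n,r}^1)]$ is Cohen--Macaulay.
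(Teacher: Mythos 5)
Your proposal is correct and follows essentially the same route as the paper: both reduce to showing that $I_{\mathcal{F}}(\Delta_s(\mathcal{G}_{n,r}^1))$ has quasi-linear quotients (using purity from Proposition \ref{fsc} and Lemma \ref{scn}) and then invoke Corollary \ref{frcm}, the only cosmetic difference being that you order the generators starting from the generic class $\mathcal{C}_{(1)}$ whereas the paper starts from the facet deleting all common edges and works outward; in both cases the heart of the argument is the same single-edge exchange property producing a linear generator in each colon ideal. Your candid flagging of the overlapping-cycle verification as the delicate step is fair — the paper's own proof asserts the existence of the required earlier generator with comparable brevity.
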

\begin{proof}
By corollary \ref{frcm}, it is sufficient to show that $I_{\mathcal{F}}\big(\Delta_s(\mathcal{G}_{n,r}^1)\big)$ has a quasi-linear quotients in $S=k[x_{11},x_{12},\hdots,x_{1m_1},x_{21},x_{22},\hdots,x_{2(m_2-1)},\hdots,x_{r1},x_{r2},\hdots,\\
x_{r(m_r-1)},x_1,x_2,\hdots,x_t]$. By lemma \ref{scn}, we have
$$ s (\mathcal{G}_{n,r}^1)=\mathcal{C}_{(1)}\bigcup\mathcal{C}_{(2)}\bigcup\mathcal{C}_{(3a)}\bigcup\mathcal{C}_{(3b)}\bigcup\mathcal{C}_{(3c)}$$
Therefore,
$$ \Delta_s (\mathcal{G}_{n,r}^1)=\Big\langle\hat{E}_{(j_1i_1,j_2i_2,\hdots,j_ri_r)}=E\backslash \{e_{j_1i_1},e_{j_2i_2},\hdots,e_{j_ri_r}\}\mid \hat{E}_{(j_1i_1,j_2i_2,\hdots,j_ri_r)}\in s (\mathcal{G}_{n,r}^1)\Big\rangle$$
and hence we can write,
$$ I_{\mathcal{F}}(\Delta_s(\mathcal{G}_{n,r}^1))=\Big( x_{\hat{E}_{(j_1i_1,j_2i_2,\hdots,j_ri_r)}}\mid \hat{E}_{(j_1i_1,j_2i_2,\hdots,j_ri_r)}\in s (\mathcal{G}_{n,r}^1)\Big).$$
Here $ I_{\mathcal{F}}(\Delta_s(\mathcal{G}_{n,r}^1))$ is a pure
monomial ideal of degree $n-r$ with
$x_{\hat{E}_{(j_1i_1,j_2i_2,\hdots,j_ri_r)}}$ as the product of all
variables in $S$ except $x_{j_1i_1},x_{j_2i_2},\hdots,x_{j_ri_r}$.
Now we will show that $
I_{\mathcal{F}}(\Delta_s(\mathcal{G}_{n,r}^1))$ has quasi-linear
quotients with respect to the following generating system:\\
$\{x_{\hat{E}_{(11,21,\hdots,r1)}}\},\{x_{\hat{E}_{(11,21,\hdots,(r-1)1,j_ri_r)}}\mid i_r\neq 1\},\{x_{\hat{E}_{(11,21,\hdots,(r-2)1,(r-1)i_{r-1},j_ri_r)}}\mid i_{r-1}\neq 1\},\\
\{x_{\hat{E}_{(11,21,\hdots,(r-3)1,(r-2)i_{r-2},j_{r-1}i_{r-1},j_ri_r)}}\mid i_{r-2}\neq 1\},\hdots ,\{x_{\hat{E}_{(11,2i_2,j_3i_3\hdots,j_ri_r)}}\mid i_2\neq 1\},\\
\{x_{\hat{E}_{(1i_1,j_2i_2,\hdots,j_ri_r)}}\mid i_1\neq 1\}$\\
Let us put
\\$\begin{array}{c}
  C_{(11,21,\hdots,(r-1)1,j_ri_r)}=\{x_{\hat{E}_{(11,21,\hdots,(r-1)1,j_ri_r)}}\mid i_r\neq 1\}, \\
  C_{(11,21,\hdots,(r-2)1,(r-1)i_{r-1},j_ri_r)}=\{x_{\hat{E}_{(11,21,\hdots,(r-2)1,(r-1)i_{r-1},j_ri_r)}}\mid i_{r-1}\neq 1\}, \\
  \vdots \\
   C_{(1i_1,j_2i_2,\hdots,j_ri_r)}=\{x_{\hat{E}_{(1i_1,j_2i_2,\hdots,j_ri_r)}}\mid i_1\neq 1\}.
\end{array}$\\
 Also for any $C_{(j_1i_1,j_2i_2,\hdots,j_ri_r)}$, denote $\bar{C}_{(j_1i_1,j_2i_2,\hdots,j_ri_r)}$  as the residue collection of all the generators which precedes $C_{(j_1i_1,j_2i_2,\hdots,j_ri_r)}$ in the above order. We will show that
$$(\bar{C}_{(j_1i_1,j_2i_2,\hdots,j_ri_r)}):( x_{\hat{E}_{(j_1i_1,j_2i_2,\hdots,j_ri_r)}})$$
contains atleast one linear generator.\\
Now for any generator $x_{\hat{E}_{(11,\hdots,(k-1)1,j_ki_k,\hdots,j_ri_r)}}$, the above said system of generators guarantee the existence of a generator $x_{\hat{E}_{(11,\hdots,(k-1)1,j_\alpha i_\alpha,j_{k+1}i_{k+1},\hdots,j_ri_r)}}$ in $\bar{C}_{(11,\hdots,(k-1)1,j_ki_k,\hdots,j_ri_r)}$ such that $j_\alpha i_\alpha\neq j_ki_k$. Therefore, by using the definition of colon ideal it is easy to see that
$$(\bar{C}_{(11,\hdots,(k-1)1,j_ki_k,\hdots,j_ri_r)}):(x_{(11,\hdots,(k-1)1,j_ki_k,\hdots,j_ri_r)})$$
contains a linear generator $x_{j_ki_k}$. Hence $I_{\mathcal{F}}(\Delta_s(\mathcal{G}_{n,r}^1))$ has quasi-linear quotients, as required.
\end{proof}
We conclude this section with an example.
\begin{Example}
\em{For the graph $\mathcal{G}_{10,2}^1$ given in Fig. 1.,
%
%
the facet ideal of the spanning simplicial complex is:\\
$$I_{\mathcal{F}}(\Delta_s(\mathcal{G}_{10,2}^1))=(x_{11,21},C_{11,j_2i_2},C_{1i_1,j_2i_2})$$
where $C_{11,j_2i_2}=x_{11,22},x_{11,23},x_{11,12},x_{11,13}$ and $C_{1i_1,j_2i_2}=x_{12,21},x_{12,22},x_{12,23},x_{13,21},x_{13,22},x_{13,23}.$
It is easy to see that, $I_{\mathcal{F}}(\Delta_s(\mathcal{G}_{10,2}^1))$ has quasi-linear quotients with respect to the ordering given to its generators (by applying above theorem). Hence the face ring of $\Delta_s(\mathcal{G}_{10,2}^1)$ is Cohen Macaulay.}

\end{Example}

 \vspace{1 pt}

\end{document}